\newtheorem{theorem}{Theorem}
\newtheorem{lemma}[theorem]{Lemma}
\newtheorem{proposition}[theorem]{Proposition}
\newtheorem{conjecture}[theorem]{Conjecture}
\theoremstyle{remark}
\newcommand{\Z}{\mathbb{Z}}
\newcommand{\Q}{\mathbb{Q}}
\newcommand{\N}{\mathbb{N}}
\newcommand{\R}{\mathbb{R}}
\newcommand{\C}{\mathbb{C}}
\renewcommand{\H}{\mathbb{H}}
\newcommand{\gen}{\operatorname{gen}}
\newcommand{\spn}{\operatorname{spn}}
\newcommand{\lcm}{\operatorname{lcm}}
\newcommand{\id}{\operatorname{id}}
\newcommand{\cls}{\text{cls}}
\newcommand{\lan}{\langle}
\newcommand{\ran}{\rangle}
\newcommand{\SL}{\operatorname{SL}}
\numberwithin{equation}{section}
\numberwithin{theorem}{section}
\title[almost universal weighted ternary sums of polygonal numbers]{almost universal weighted ternary sums \\ of polygonal numbers}
\author{Siu Hang Man}
\address{Siu Hang Man, Deparmtent of Mathematics, University of Hong Kong, Pokfulam, Hong Kong.}
\email{der.gordox@gmail.com}
\author{Archie Mehta}
\address{Archie Mehta, Department of Mathematics, Indian Institute of Technology, Roorkee.}
\email{archiemehta.iitr@gmail.com}
\thanks{The research of the first author was supported by Professor Rosie Young Fund and the research of the second author was supported by a research assistancship on the grant of Dr. Ben Kane, grant project number 27300314 of the Research Grants Council of Hong Kong SAR, China. }
\begin{document}

\begin{abstract}
For a natural number $m$, generalized $m$-gonal numbers are defined by the formula $p_m(x)=\frac{(m-2)x^2-(m-4)x}{2}$ with $x\in \mathbb Z$. In this paper, we determine a criterion on $a,b,c,m$ for which the weighted ternary sum $P_{a,b,c,m}:=ap_m(x)+bp_m(y)+cp_m(z)$ is almost universal. We also prove for some $a,b,c,m$ that the form $P_{a,b,c,m}$ is not almost universal, while it represents all possible congruence classes.
\end{abstract}

\date{\today}
\subjclass[2010]{11E20, 11E25, 11E45, 11E81, 11H55, 05A30}
\keywords{sums of polygonal numbers, ternary quadratic polynomials, quadratic reciprocity, modular forms, almost universal forms, theta series, lattice theory and quadratic spaces, spinor genus theory}
\maketitle

\section{Introduction} 

For $m\in\N$, the $x$-th generalized $m$-gonal number is defined to be $p_m(x)=\frac{(m-2)x^2-(m-4)x}{2}$, where $x\in\Z$.  These sets of numbers have aroused the interest of many mathematicians in the past centuries. In 1813, Cauchy proved that every natural number is a sum of  at most $m$ many $m$-gonal numbers, a conjecture raised by Fermat in 1638 \cite{Cauchy}. Nevertheless, for large enough $n\in\N$, it is likely that one can represent $n$ with many fewer $m$-gonal numbers. Indeed, Haensch and Kane \cite{HaenschKane} proved that for $m\not\equiv 2\pmod{3}$ and $4\nmid m$, every large enough $n\in\N$ can be written as the sum of 3 $m$-gonal numbers.

For fixed values of $m$ (e.g. $m=4$ for square numbers), it is also natural to consider representations of integers by weighted sums. For example, for $m=4$, one considers representations of $n\in\N_0$ as ternary weighted sums of squares, i.e., 
\begin{align}\label{eqn:sumssquares}
n=ax^2+by^2+cz^2,
\end{align}
where $a,b,c\in\N$ are considered to be fixed.  The first consideration is a local one, since integers not represented modulo a given modulus clearly cannot be represented globally. For example, the entire congruence class $n=8k+7$ cannot be represented as the sum of $3$ squares because this identity is not soluble in $\Z/ 8\Z$. 
Unfortunately, this is part of a much more general phenomenon. No matter how $a,b,c$ are chosen, there is always a {\it local obstruction} for \eqref{eqn:sumssquares};
that is, there exists a congruence class that is not represented at all. Nevertheless, this suggests another possible direction of generalisation to the problem: for a fixed value of $m$, what weights $a,b,c$ 
can one choose so that
\begin{align}\label{eqn:polyweighted}
P_{a,b,c,m}(x,y,z):=ap_m(x)+bp_m(y)+cp_m(z)
\end{align}
is {\it almost universal}, (i.e., represents all sufficiently large $n\in\N$). In this direction, Kane and Sun \cite{KaneSun} 
obtained a near-classification of almost universal weighted sums of triangular numbers (corresponding to $m=3$ and more generally weighted mixed ternary sum of triangular and square numbers;
this classification was later completed by Chan--Oh \cite{ChanOh} and Chan--Haensch \cite{ChanHaensch}.

As demonstrated above, this traditional old problem has two main directions of generalisation, one on the value of $m$, and the other on the weighting $a,b,c$. In this paper, we combine these two directions, allowing $a,b,c$ and $m$ all to vary and studying whether the form $P_{a,b,c,m}$ in \eqref{eqn:polyweighted} is almost universal or not.  We begin with a necessary condition about local obstructions; namely, for a form to be almost universal, it must represent all possible congruence classes. However, the absence of local obstruction does not guarantee that the form is almost universal. To better investigate this issue, we define
\[\mathcal{S}_{a,b,c,m}=\big\{n\in\N | \not\exists (x,y,z)\in\Z^3 \text{ with } P_{a,b,c,m}(x,y,z)=n\big\},\]
\[\mathcal{S}_{a,b,c,m}^t = \big\{n\in\mathcal{S}_{a,b,c,m} | \exists r\in\Z \text{ such that } 8(m-2)n+(a+b+c)(m-4)^2 = tr^2\big\},\]
and for $T\subseteq\N$,
\[
\mathcal{S}_{a,b,c,m}^T = \bigcup\limits_{t\in T} \mathcal{S}_{a,b,c,m}^t.
\]
Roughly speaking, one may use the analytic theory of quadratic forms to show that if the form $P_{a,b,c,m}$ has no local obstructions and every integer is locally ``primitively'' represented, then there exists a finite subset $T$ of $\N$ for which the set $\mathcal{S}_{a,b,c,m}\backslash\mathcal{S}_{a,b,c,m}^T$ is finite. Clearly, if $T=\emptyset$, then the form $P_{a,b,c,m}$ is almost universal. As a result, we obtain the following theorem on almost universality:
\begin{theorem}\label{aucrit}
Suppose $P_{a,b,c,m}$ is a form without local obstructions (i.e. every possible congruence class is represented by the form). Let $\ell_n :=8(m-2)n+(a+b+c)(m-4)^2$, $u = \gcd_{n\geq 0} \ell_n$, $s$ be the square part of $u$, $\ell'_n = \ell_n/s$, and $T$ be the set of positive integral divisors of $\gcd(4(m-2),\ell'_n)$. If the congruence equation
\begin{align}
tr^2\equiv(a+b+c)(m-4)^2 \pmod{8(m-2)}
\end{align}
is not solvable for every $t\in T$, then $P_{a,b,c,m}$ is almost universal.
\end{theorem}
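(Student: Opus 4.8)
The plan is to convert the polygonal representation problem into one for a ternary quadratic form, quote the analytic machinery recalled above to confine the unrepresented integers to finitely many square classes, and then use the arithmetic hypothesis to empty those classes. Completing the square is the first step: multiplying $P_{a,b,c,m}(x,y,z)=n$ by $8(m-2)$ and setting $X=2(m-2)x-(m-4)$, $Y=2(m-2)y-(m-4)$, $Z=2(m-2)z-(m-4)$ turns the equation into
\[
aX^2+bY^2+cZ^2=8(m-2)n+(a+b+c)(m-4)^2=\ell_n,
\]
subject to $X\equiv Y\equiv Z\equiv-(m-4)\pmod{2(m-2)}$. Thus $n$ is represented by $P_{a,b,c,m}$ exactly when $\ell_n$ is represented by $Q(X,Y,Z)=aX^2+bY^2+cZ^2$ along this fixed residue class; extracting the common square factor $s\mid u=\gcd_{n\ge0}\ell_n$ normalizes this to the primitive representation of $\ell'_n=\ell_n/s$, matching the ``locally primitively represented'' situation of the quoted theory. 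Note for later that $\ell_n\equiv(a+b+c)(m-4)^2\pmod{8(m-2)}$ holds for every $n$.

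Next I would apply the analytic theory recalled before the statement. Decomposing the relevant coset theta series into its Eisenstein and cuspidal parts, the Eisenstein coefficients are, up to bounded factors, the product of the local representation densities of $Q$ and grow like $\ell_n^{1/2}$; by the no-local-obstruction hypothesis these densities are positive. The cuspidal coefficients are $O(\ell_n^{1/2-\delta})$ for some $\delta>0$ by Duke's bound for Fourier coefficients of half-integral weight cusp forms. Hence every sufficiently large $\ell_n$ is represented, except possibly in the \emph{spinor exceptional} square classes, where the genus main term can fail to be matched by the single form. The local spinor-norm computation shows that such a class $t\,\Z^2$ can occur only for $t$ dividing $\gcd(4(m-2),\ell'_n)$; with $T$ the set of these divisors, this is precisely the assertion that $\mathcal{S}_{a,b,c,m}\setminus\mathcal{S}_{a,b,c,m}^T$ is finite.

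It then remains to show $\mathcal{S}_{a,b,c,m}^T=\emptyset$. Fix $t\in T$ and suppose $n\in\mathcal{S}_{a,b,c,m}^t$; by definition there is $r\in\Z$ with $\ell_n=tr^2$. Combined with the congruence $\ell_n\equiv(a+b+c)(m-4)^2\pmod{8(m-2)}$ noted above, this exhibits a solution of $tr^2\equiv(a+b+c)(m-4)^2\pmod{8(m-2)}$, contradicting the hypothesis. Therefore $\mathcal{S}_{a,b,c,m}^t=\emptyset$ for every $t\in T$, so $\mathcal{S}_{a,b,c,m}^T=\emptyset$ and hence $\mathcal{S}_{a,b,c,m}=\mathcal{S}_{a,b,c,m}\setminus\mathcal{S}_{a,b,c,m}^T$ is finite. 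Thus all but finitely many $n$ are represented and $P_{a,b,c,m}$ is almost universal.

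The genuine obstacle sits in the middle step: certifying that the finite exceptional set produced analytically is captured by the divisors of $\gcd(4(m-2),\ell'_n)$. This requires computing the spinor norms of $Q$ at each prime dividing $4(m-2)$ and $abc$, verifying that the spinor genus can displace $\ell'_n$ only within square classes indexed by such divisors, and keeping careful track of the square factor $s$ so that the reduction to primitive representations is legitimate. Once $T$ is correctly identified, the congruence criterion closes the argument immediately, as in the previous paragraph.
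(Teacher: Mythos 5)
Your overall architecture matches the paper's: complete the square to reduce to representing $\ell_n$ by $aX^2+bY^2+cZ^2$ on a fixed residue class, decompose the associated weight-$3/2$ theta series, compare growth rates of the Eisenstein and cuspidal parts via Duke's bound, confine the possible exceptions to square classes $tr^2$, and then kill those classes with the congruence hypothesis (your final paragraph is exactly the paper's closing step). But there are two genuine gaps in the middle. First, your claim that the Eisenstein coefficients grow like $\ell_n^{1/2}$ ``by the no-local-obstruction hypothesis'' is not sufficient: positivity of the local densities does not prevent the product of densities from degenerating when an anisotropic prime $p$ divides $\ell_n$ to unbounded order. The paper devotes Lemma \ref{anisotropic} (and the string of Lemmas \ref{seriesstart}--\ref{seriesend}) to verifying that the absence of local obstructions forces the progression $\mathcal{N}'=\{\ell_n\}$ to have bounded divisibility at every anisotropic prime of $Q'$; only then does the Kane--Sun lower bound $\ell^{1/2-\epsilon}$ apply. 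You need this step, and it is not automatic.

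Second, the step you yourself flag as ``the genuine obstacle'' --- certifying that the exceptional square classes are indexed by divisors $t$ of $\gcd(4(m-2),\ell'_n)$ --- is where your route and the paper's diverge, and your route leaves it unproved. You propose to identify these classes as spinor exceptional classes via explicit spinor-norm computations at the primes dividing $4(m-2)$ and $abc$, which you do not carry out. The paper avoids spinor norms entirely at this point: it writes $\Theta=\mathcal{E}+\mathcal{U}+f$ with $\mathcal{U}$ a combination of unary theta functions $\vartheta_{h,t,N}$, and uses only the \emph{formal shape} of such functions --- $t$ squarefree dividing $2N=4(m-2)$, support on exponents $tr^2$ --- together with the non-negativity of the coefficients of $\Theta$ and the growth comparison (the negative part $\vartheta_{h,t,N}^{-}$ grows too fast to be absorbed by $f$, so its support must lie inside the support of $\mathcal{E}$, i.e.\ $tr^2=\ell_n$). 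This forces $t\mid\gcd(4(m-2),\ell'_n)$ with no local computation, and the unsolvability of the congruence then forces $\mathcal{U}=0$. If you want to keep the spinor-genus framing you must actually supply the spinor-norm calculations you defer; as written, the proposal asserts the key containment rather than proving it.
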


It is natural to investigate whether there exist forms with no local obstructions (and for which the ternary weighted sum of squares primitively represents the congruence class of interest) which are not almost universal.  In order to study this, we rely on the approach introduced by Haensch and Kane \cite{HaenschKane}, utilising analogues and generalisations of the Siegel--Weil formula. Let $L_0$ be a lattice in a positive definite space (for our purpose, we assume $\dim(L_0)=3$), and $\cls/\gen(L_0)$ be the set of class representatives in the genus of $L_0$. The Siegel--Weil formula states that the weighted sum of theta series
\begin{align}
\mathcal{E}_{\gen(L_0)} :=\left(\sum\limits_{L\in\cls/\gen(L_0)}\omega_L^{-1}\right)^{-1}\sum\limits_{L\in\cls/\gen(L_0)}\frac{\Theta_L}{\omega_L}
\end{align}
is an Eisenstein series. Here  $\Theta_L$ denotes the theta function associated to $L$, defined by
\[
\Theta_L(\tau)=\sum\limits_{n\geq 0} r_L(n)q^n,
\]
where $r_L(n)$ is the number of solutions $x\in\Z^3$ to the equation $x^TLx=n$; and $\omega_L$ is the number of automorphs of the lattice $L$ (i.e. the number of linear transformations that map $L$ to itself).

An analogue that sums over representatives in the spinor genus $\spn(L_0)$ was proven by Schulze-Pillot \cite{SPillot1,SPillot2}:
\begin{align}
\left(\sum\limits_{L\in\cls/\spn(L_0)}\omega_L^{-1}\right)^{-1}\sum\limits_{L\in\cls/\spn(L_0)}\frac{\Theta_L}{\omega_L}= \mathcal{E}_{\gen(L_0)}+\mathcal{U}_{\spn(L_0)},
\end{align}
where $\mathcal{U}_{\spn(L_0)}$ is a linear combination of unary theta functions (see (\ref{utfdef})). 

A generalisation of the Siegel-Weil formula from a lattice $L$ to an arbitrary lattice coset $L+\nu$ (with $\nu\in \Q L$) was proven by Shimura \cite{ShimuraLattCo}; namely, the weighted sum of theta series
\begin{align}\label{ShimuraSW}
\mathcal{E}_{\gen(L+\nu)} :=\left(\sum\limits_{M+\mu\in\cls/\gen(L+\nu)}\omega_{M+\mu}^{-1}\right)^{-1} \sum\limits_{M+\mu\in\cls/\gen(L+\nu)}\frac{\Theta_{M+\mu}}{\omega_{M+\mu}}
\end{align}
is also an Eisenstein series. Here $\gen(L+\nu)$ (see \eqref{gencosetdef}), $\Theta_{M+\mu}$, and $\omega_{M+\mu}$ are defined analogously to the their lattice counterparts.

Haensch and Kane \cite{HaenschKane} conjectured the spinor genus analogue of the generalised Siegel--Weil formula by Shimura:
\begin{conjecture}\label{sswconj} The equation
\begin{align}\label{ssweqn}
\left(\sum\limits_{M+\mu\in\cls/\spn(L+\nu)}\omega_{M+\mu}^{-1}\right)^{-1} \sum\limits_{M+\mu\in\cls/\spn(L+\nu)}\frac{\Theta_{M+\mu}}{\omega_{M+\mu}} = \mathcal{E}_{\spn(L+\nu)}+\mathcal{U}_{\spn(L+\nu)}
\end{align}
holds, where $\mathcal{U}_{\spn(L+\nu)}$ is again a linear combination of unary theta functions.
\end{conjecture}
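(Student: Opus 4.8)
\section*{Proof proposal for Conjecture \ref{sswconj}}

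The plan is to adapt the representation-theoretic proof of Schulze-Pillot's spinor analogue \cite{SPillot1,SPillot2} to the lattice-coset setting governed by Shimura's framework \cite{ShimuraLattCo}. Fix the ternary space $V=\Q L$ and work adelically: the coset $L+\nu$ determines a Schwartz function $\varphi=\otimes_p\varphi_p$ on $V(\mathbb{A})$ (at each finite $p$ the suitably shifted characteristic function of $L_p+\nu$), and the associated theta kernel $\theta(g,h;\varphi)$, with $g$ in the metaplectic cover $\widetilde{\SL}_2(\mathbb{A})$ and $h\in O_{\mathbb{A}}(V)$, transforms under the Weil representation twisted by $\nu$. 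After this reformulation, summing $\Theta_{M+\mu}/\omega_{M+\mu}$ over the classes in a fixed spinor genus becomes the integral of $\theta(g,h;\varphi)$ against the characteristic function of a spinor-genus coset inside $O(V)(\Q)\backslash O_{\mathbb{A}}(V)$. Shimura's generalized Siegel--Weil identity \eqref{ShimuraSW} says that integrating against the \emph{full} genus produces the Eisenstein series $\mathcal{E}_{\gen(L+\nu)}$, so the entire problem reduces to identifying the difference $\Delta:=(\text{spinor average})-\mathcal{E}_{\gen(L+\nu)}$, a holomorphic modular form of weight $3/2$ on the relevant congruence subgroup with the $\nu$-induced multiplier.

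First I would show that $\Delta$ is cuspidal and split it by a spinor character. The spinor genera inside $\gen(L+\nu)$ are indexed by a finite abelian $2$-group $G$, the quotient of the idele spinor-norm group by the images of $O(V)(\R)$, $O_{\mathbb{A}}(L+\nu)$ and $\Q^\times$; decomposing the spinor-genus average over the characters $\chi$ of $G$ writes $\Delta=\sum_{\chi\neq 1}\Delta_\chi$, where $\Delta_\chi$ is the $\chi$-isotypic projection of the genus average. Since the local representation densities depend only on the genus, every spinor average has the same Eisenstein component, namely all of $\mathcal{E}_{\gen(L+\nu)}$; equivalently, the trivial character already accounts for the Eisenstein part, so each $\Delta_\chi$ with $\chi\neq 1$ is orthogonal to the Eisenstein spectrum and hence is a weight-$3/2$ cusp form.

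The heart of the argument is to show each $\Delta_\chi$ lies in the span of unary theta functions. Here I would analyze the good Hecke operators $T_{p^2}$. By the lattice-case phenomenon established by Schulze-Pillot, for $p$ outside the finite set $S$ of primes dividing $2\det(L)$ and the denominator of $\nu$, the operators $T_{p^2}$ act on the span of spinor-genus differences through the same eigensystem $\lambda_p$ as the Eisenstein series; thus $\Delta_\chi$ lies in the $\lambda_p$-eigenspace of $T_{p^2}$ for all $p\notin S$. By Waldspurger's theory of the Shimura correspondence, together with the fact that unary theta functions $\sum_{n}\psi(n)\,n\,q^{tn^2}$ are precisely the weight-$3/2$ cusp forms whose Shimura lift is an Eisenstein series (in the spirit of Serre--Stark and Vign\'eras), any cusp form carrying the Eisenstein eigensystem at almost all good primes must be a linear combination of unary theta functions. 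That $\chi$ is quadratic (as $G$ is a $2$-group) is what forces the lift to be a character twist, i.e.\ genuinely unary; this is where the paper's quadratic-reciprocity input enters. Hence $\Delta_\chi\in\mathcal{U}_{\spn(L+\nu)}$, which is exactly \eqref{ssweqn}.

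The main obstacle will be the local analysis at the bad primes $p\in S$, and especially at $p=2$ and at primes where $\nu$ has large denominator. In Schulze-Pillot's lattice case the local Weil-representation intertwiners and the local spinor-norm computations are standard; with a nonzero coset shift $\nu$ they must be redone, the difficulty being to verify that twisting by $\chi$ interacts with the shifted local Schwartz functions $\varphi_p$ so as to produce \emph{only} unary rather than higher-rank cuspidal contributions, and that the good eigensystem of every spinor genus really does coincide with that of the genus after incorporating the shift. Concretely, one must show that the relevant local theta lift of the quadratic character attached to $\chi$ stays one-dimensional in the presence of $\nu$, so that the coset creates no genuine weight-$2$ cuspidal Shimura lift. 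I expect this to demand a careful case analysis of the discriminant form of $L+\nu$ at $2$ and a compatibility check between the spinor character and the multiplier system; once these local statements are in place, the global conclusion follows from the spectral decomposition $\Delta=\sum_{\chi\neq1}\Delta_\chi$ above.
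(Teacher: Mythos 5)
You should first be aware that the statement you are addressing is stated in the paper as Conjecture \ref{sswconj}, not as a theorem: the paper does not prove \eqref{ssweqn} in any generality. What it actually establishes is a single instance, Proposition \ref{ssw113prop}, for the particular coset $L+\nu$ attached to $P_{1,1,3,17}$, and it does so by a finite computation: both sides of \eqref{ssw113} are weight $3/2$ modular forms on $\Gamma_1(4\cdot 3\cdot 30^2)$, so by the valence formula they agree once their first $9331200$ Fourier coefficients agree, which is checked by machine. Your proposal instead sketches a general proof by adapting Schulze-Pillot's representation-theoretic argument \cite{SPillot1,SPillot2} to the coset setting of \cite{ShimuraLattCo}. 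That is the natural line of attack, and the global skeleton (theta kernel attached to the shifted Schwartz function, character decomposition of the spinor-genus average over the $2$-group $G$, cuspidality of each $\Delta_\chi$ with $\chi\neq 1$, and the Serre--Stark/Vign\'eras characterization of unary theta series) is sensible.

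However, what you have written is a program, not a proof, and the two steps you defer are exactly the mathematical content of the conjecture. First, the claim that each $\Delta_\chi$ is annihilated by $T_{p^2}-\lambda_p$ at all good primes, with $\lambda_p$ the Eisenstein eigenvalue, is imported from the lattice case; but Schulze-Pillot's proof of that fact rests on local Weil-representation computations for the unshifted characteristic functions of $L_p$, and it is precisely the interaction of the shift $\nu$ with those local intertwiners (above all at $p=2$ and at primes dividing the denominator of $\nu$) that is not known. Second, even granting the eigensystem claim, you still must show that a weight-$3/2$ cusp form carrying the Eisenstein eigensystem at almost all primes, on the specific congruence subgroup and with the specific multiplier forced by $\nu$, is a combination of unary theta series; this is the bad-prime analysis you explicitly postpone. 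Since you flag both points as obstacles rather than resolving them, the argument as written does not establish \eqref{ssweqn}. If your aim is only what the paper needs, the honest conclusion is that the conjecture can be verified for any \emph{fixed} coset by the Sturm-bound computation of Proposition \ref{ssw113prop}, while a general proof along your proposed lines remains open.
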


In \cite{HaenschKane}, this conjecture was verified for the lattice coset corresponding to the form $P_{1,1,1,14}$, and it was proven that $P_{1,1,1,14}$ is not almost universal even though it has no local obstructions. In this paper, we verify the conjecture for the lattice coset corresponding to the form $P_{1,1,3,17}$, a form with non-trivial weighting, and prove that $P_{1,1,3,17}$ is also not almost universal while having no local obstructions. The idea of proof is analogous to that in \cite{HaenschKane}. We consider the Fourier expansion of the equation \eqref{ssweqn}. Using the combinatorial interpretation of the Fourier coefficients of the theta series, one immediately concludes that these coefficients are always non-negative. Hence if the $\ell$th coefficient of $\mathcal{E}_{\spn(L+\nu)}+\mathcal{U}_{\spn(L+\nu)}$ vanishes, it must vanish for every $\Theta_{M+\mu}$ with $M+\mu\in \spn(L+\nu)$.  In particular, the $\ell$th coefficient of the theta series $\Theta_{L+\nu}$ counts the number of representations of $\ell$ by the form $P_{1,1,3,17}$, and we conclude that if infinitely many coefficients of $\mathcal{E}_{\spn(L+\nu)}+\mathcal{U}_{\spn(L+\nu)}$ are zero, then there are infinitely many integers $\ell$ which are not represented by the form $P_{1,1,3,17}$, i.e., $P_{1,1,3,17}$ is not almost universal.
\begin{proposition}\label{notauintro}
The form $P_{1,1,3,17}$ is not almost universal.
\end{proposition}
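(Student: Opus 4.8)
The plan is to recast representation by $P_{1,1,3,17}$ as a lattice-coset problem and then apply the spinor-genus Siegel--Weil formula of Conjecture \ref{sswconj}, which I would verify for the coset at hand. First I would complete the square: multiplying $n=P_{1,1,3,17}(x,y,z)$ by $8(m-2)=120$ and setting $X=30x-13$, $Y=30y-13$, $Z=30z-13$ gives
\[
\ell_n := 120n+845 = X^2+Y^2+3Z^2,\qquad X\equiv Y\equiv Z\equiv -13\pmod{30}.
\]
Thus $n$ is represented by $P_{1,1,3,17}$ if and only if $\ell_n$ is represented on the coset $L+\nu$, where $L=30\Z^3$ carries the form $Q(X,Y,Z)=X^2+Y^2+3Z^2$ and $\nu=(-13,-13,-13)$. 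The congruences force every represented $\ell$ to satisfy $\ell\equiv 5\pmod{120}$, matching $845\equiv 5\pmod{120}$, and the $\ell$-th Fourier coefficient of $\Theta_{L+\nu}$ equals the number of such representations, hence is a non-negative integer.

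Next I would verify Conjecture \ref{sswconj} for $L+\nu$. This means computing the spinor genus $\spn(L+\nu)$ explicitly: its class representatives $M+\mu$, their automorph numbers $\omega_{M+\mu}$, and hence the weighted average on the left of \eqref{ssweqn}; then identifying the right-hand side $\mathcal{E}_{\spn(L+\nu)}+\mathcal{U}_{\spn(L+\nu)}$, that is, the weight-$3/2$ Eisenstein series together with the finite combination $\mathcal{U}_{\spn(L+\nu)}$ of unary theta functions. Since the conductor is divisible only by $2,3,5$ (the primes dividing $30$), this reduces to a finite local computation of representation densities at $2$, $3$, and $5$, from which I would read off the square class $t\,\Z^{\times 2}$ on which $\mathcal{U}_{\spn(L+\nu)}$ is supported.

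Then I would exhibit infinitely many vanishing coefficients of $\mathcal{E}_{\spn(L+\nu)}+\mathcal{U}_{\spn(L+\nu)}$. Off the square class $\ell=tr^2$ the unary contribution vanishes while the Eisenstein coefficient stays positive for large $\ell$ (no local obstruction), so no zeros arise there; the cancellation must occur on the square class, where both contributions have size $\asymp\sqrt{\ell}$. On $\ell=tr^2$ the local densities entering the Eisenstein coefficient simplify to an explicit elementary function of $r$, while the unary theta coefficient equals $\psi(r)\,r$ up to a constant for a fixed Dirichlet character $\psi$. Comparing the two closed forms, I would show that for $r$ in a suitable arithmetic progression (those for which $\psi(r)$ produces the correct sign and the densities line up) one has $a_{\mathcal{E}}(tr^2)+a_{\mathcal{U}}(tr^2)=0$, yielding infinitely many $\ell_k=tr_k^2$ with vanishing coefficient; since the coset already forces $\ell_k\equiv 5\pmod{120}$, each large $\ell_k$ is genuinely of the form $120n_k+845$ with $n_k\in\N$.

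Finally, non-negativity of theta coefficients closes the argument: a vanishing coefficient of the spinor-genus average forces the $\ell_k$-th coefficient of every $\Theta_{M+\mu}$ with $M+\mu\in\spn(L+\nu)$ to vanish, in particular that of $\Theta_{L+\nu}$. Hence each $n_k$ is unrepresented by $P_{1,1,3,17}$, and there are infinitely many such $n_k$, proving the form is not almost universal. The hard part is the second step together with the cancellation in the third: proving the conjectural identity \eqref{ssweqn} for this specific coset, and then extracting from the explicit $\mathcal{E}_{\spn(L+\nu)}+\mathcal{U}_{\spn(L+\nu)}$ an infinite family of common zeros lying simultaneously in the correct square class and the progression $\ell\equiv 5\pmod{120}$.
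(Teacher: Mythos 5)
Your architecture matches the paper's exactly: complete the square to land on the coset $L+\nu$ with $Q=X^2+Y^2+3Z^2$ and $X,Y,Z\equiv -13\pmod{30}$, verify the spinor-genus Siegel--Weil identity for this coset, locate an infinite family of exponents where the unary theta contribution cancels the Eisenstein contribution, and finish by non-negativity of theta coefficients. Two of your intermediate claims, however, would not go through as written, and the paper's proof shows what is actually needed. First, verifying the identity \eqref{ssweqn} does \emph{not} reduce to a finite local computation of representation densities at $2,3,5$: the densities determine $\mathcal{E}_{\gen(L+\nu)}$, but one must still rule out a residual cusp form orthogonal to the unary theta functions in the difference $\Theta_{\spn(L+\nu)}-\mathcal{E}_{\gen(L+\nu)}$. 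The paper does this via the valence formula, checking Fourier coefficients up to the Sturm bound for $\Gamma_1(4\cdot 3\cdot 30^2)$ (about $9.3\times 10^6$ terms) to establish $\Theta_{\spn(L+\nu)}=\mathcal{E}_{\gen(L+\nu)}-\vartheta'(\tau)/36$ with $\vartheta'(\tau)=\sum_{n\equiv 1\,(6)}nq^{5n^2}$. Second, the cancellation does not occur for all $r$ in an arithmetic progression on the square class $\ell=5r^2$. The Eisenstein coefficient at $5r^2$ is governed by the Hurwitz class number $H(15r^2)$ (via a sieve identity and Jones's formula $r_Q(5\ell^2)=16H(15\ell^2)$), and $H(15r^2)$ grows multiplicatively in $r$; exact cancellation with the linear unary coefficient $r$ happens precisely when $r=\ell$ is \emph{prime} with $\left(\tfrac{-15}{\ell}\right)=1$, since then $H(15\ell^2)=1+\ell-\left(\tfrac{-15}{\ell}\right)=\ell$ and the Eisenstein coefficient is exactly $\ell/36$. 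So the unrepresented family is $120n+845=5\ell^2$ with $\ell$ prime, $\ell\equiv 1,19\pmod{30}$ --- primes in a progression, not a full progression of $r$. With those two repairs your outline becomes the paper's proof.
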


Meanwhile, the lattice coset $L+\nu$ corresponding to the form $P_{1,1,3,17}$ has sublattice cosets that correspond to the form $P_{1,1,3,30r+17}$, for some $r\in\N$. As a result, we can obtain a family of forms that are not almost universal. In particular, we prove the following theorem:

\begin{theorem}\label{notaugenintro}
For any non-negative integer $r\not\equiv 2\pmod{5}$, the form $P_{1,1,3,30r+17}$ is not almost universal.
\end{theorem}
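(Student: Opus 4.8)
The plan is to deduce the general statement from the case $r=0$, which is Proposition \ref{notauintro}, by realizing the lattice coset attached to $P_{1,1,3,30r+17}$ as a \emph{sublattice coset} of the one attached to $P_{1,1,3,17}$ and transferring non-representation along the inclusion. Completing the square via $8(m-2)p_m(x)=(2(m-2)x-(m-4))^2-(m-4)^2$ turns $P_{1,1,3,m}(x,y,z)=n$ into $X^2+Y^2+3Z^2=\ell$, where $\ell=8(m-2)n+5(m-4)^2$ and $(X,Y,Z)$ lies in the coset $-(m-4)(1,1,1)+2(m-2)\Z^3$. For $m=17$ this is a coset $L^{(0)}+\nu^{(0)}$ with $L^{(0)}=30\Z^3$ and $\nu^{(0)}=-13(1,1,1)$, while for $m=30r+17$ it is $L^{(r)}+\nu^{(r)}$ with $L^{(r)}=30(2r+1)\Z^3$ and $\nu^{(r)}=-(30r+13)(1,1,1)$. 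Since $-(30r+13)\equiv-13\pmod{30}$ and $30(2r+1)\Z^3\subseteq 30\Z^3$, I get the coset inclusion $L^{(r)}+\nu^{(r)}\subseteq L^{(0)}+\nu^{(0)}$, and hence $\Theta_{L^{(r)}+\nu^{(r)}}\le\Theta_{L^{(0)}+\nu^{(0)}}$ coefficientwise.

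The point of this reduction is that the only analytic input required is the verification of Conjecture \ref{sswconj} for the \emph{base} coset that already underlies Proposition \ref{notauintro}; nothing new is needed for each $r$. Because $\Theta_{L^{(0)}+\nu^{(0)}}$ appears with positive weight in the spinor average $\mathcal{E}_{\spn(L^{(0)}+\nu^{(0)})}+\mathcal{U}_{\spn(L^{(0)}+\nu^{(0)})}$ and all theta coefficients are non-negative, at any $\ell$ where that average has a vanishing Fourier coefficient one gets $a_\ell(\Theta_{L^{(0)}+\nu^{(0)}})=0$, and therefore $a_\ell(\Theta_{L^{(r)}+\nu^{(r)}})=0$ by the coefficientwise domination above. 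Thus every $\ell$ in the infinite exceptional set $E$ produced in the proof of Proposition \ref{notauintro} is also unrepresented by the sub-coset. Following the setup of Theorem \ref{aucrit}, for $m=17$ one has $\gcd(4(m-2),\ell_n)=5$, so the relevant exceptional square class is $t=5$ (seeded by $\ell_0=845=5\cdot 13^2$), and $E$ consists of integers with $\ell=5k^2$ subject to a congruence condition together with a multiplicative condition on $k$ coming from the vanishing of the associated unary theta coefficient.

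It remains to check that infinitely many $\ell\in E$ actually arise as sub-coset values $\ell^{(r)}_n=120(2r+1)n+5(30r+13)^2$, i.e. as the $\ell\equiv 5(30r+13)^2\pmod{120(2r+1)}$ that are large. Equivalently, writing $\ell^{(r)}_n=\ell^{(0)}_{n'}$ with $n'=(2r+1)n+\tfrac{5r(15r+13)}{2}\in\Z$, I must show $E$ meets the residue class $n'\equiv\tfrac{5r(15r+13)}{2}\pmod{2r+1}$ infinitely often. Since $\ell=5k^2$ on $E$, this amounts to producing infinitely many $k$ with $5k^2\equiv 5(30r+13)^2\pmod{120(2r+1)}$ that still satisfy the multiplicative vanishing condition; I would choose $k$ in a suitable residue class modulo $24(2r+1)$ (for instance $k\equiv 30r+13$) and then, by the Chinese Remainder Theorem together with Dirichlet's theorem on primes in progressions, select infinitely many such $k$ along a subprogression on which the vanishing condition holds.

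The main obstacle is guaranteeing that this multiplicative vanishing condition survives the additional congruence the sub-coset imposes at the prime $5$. Separating the $5$-part of $5k^2\equiv 5(30r+13)^2\pmod{120(2r+1)}$, the constraint at $5$ is vacuous exactly when $v_5(2r+1)=0$, equivalently $25\nmid m-2$, equivalently $r\not\equiv 2\pmod 5$; in that regime the selection of $k$ reduces to a congruence coprime to $5$ plus the factorization condition, which Dirichlet's theorem satisfies, yielding infinitely many admissible $k$ and hence infinitely many $n$ not represented by $P_{1,1,3,30r+17}$. When $r\equiv 2\pmod 5$ the factor $25\mid 120(2r+1)$ pins $k$ into fixed residues modulo $5^{v_5(2r+1)}$ (note $5\nmid 30r+13$, so $k\equiv\pm(30r+13)$ there), and these can be incompatible with the vanishing condition inherited from Proposition \ref{notauintro}, so the transfer breaks down; this is precisely why such $r$ are excluded. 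The heart of the proof is therefore this local analysis at $5$, together with the routine check that the conditions at $2$, $3$, and the primes dividing $2r+1$ remain jointly solvable, after which Dirichlet's theorem completes the argument and establishes Theorem \ref{notaugenintro}.
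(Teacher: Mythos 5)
Your proposal is correct and follows essentially the same route as the paper: realize the coset for $P_{1,1,3,30r+17}$ as a sub-coset of the one for $P_{1,1,3,17}$, transfer the vanishing of the $5\ell^2$-th theta coefficients via coefficientwise domination, and then use the Chinese Remainder Theorem to show the congruence $\ell^2\equiv(30r+13)^2\pmod{24(2r+1)}$ is compatible with $\ell\equiv 1,19\pmod{30}$ exactly when $5\nmid 2r+1$, i.e.\ $r\not\equiv 2\pmod 5$. You are in fact slightly more explicit than the paper in invoking Dirichlet's theorem to produce infinitely many admissible primes $\ell$ and in isolating the obstruction at the prime $5$, but the argument is the same.
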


The content of the paper is structured as follows. In Section 2, we give a brief introduction to the tools we use in later sections. In Section 3, we deal with the local conditions, that is, we prove Propositions \ref{localobsodd} and \ref{localobs2}. In Section 4, we prove Theorem \ref{aucrit}. Finally, in Section 5, we prove Proposition \ref{notauintro}, and its generalisation, Theorem \ref{notaugenintro}. We also explicitly construct infinite subsets of $\N$ that are not represented by these forms.

\section*{Acknowledgements}
We thank Ben Kane for his kind supervision and insightful comments.

\section{Preliminaries}

In this section, we give a brief introduction to mathematical tools that are utilised in the proofs.

\subsection{Modular forms}
Firstly, we fix some notation. Let $\H$ be the upper half complex plane, consisting of $\tau\in\C$ with Im$(\tau)>0$; and $\gamma = (\begin{smallmatrix} a&b\\ c&d \end{smallmatrix})$ be a matrix in $\SL_2(\Z)$ (the group of 2-by-2 matrices with integral entries and determinant 1). Then, we can define a group action of $\SL_2(\Z)$ on $\H$, by the transformation $\gamma\tau :=\frac{a\tau+b}{c\tau+d}$. Define also $j(\gamma,\tau):=c\tau+d$.

Also, for a subgroup $\Gamma\in\SL_2(\Z)$ and weight $k\in\R$, a multiplier system is a function $\chi: \Gamma\to\C$ such that for all $\gamma,M\in\Gamma$, we have
\[
\chi(M\gamma)j(M\gamma,\tau)^k=\chi(M)j(M,\gamma\tau)^k\chi(\gamma)j(\gamma,\tau)^k.
\]
We can then define the slash operator as:
\[
f|_{k,\chi}\gamma(\tau) = \chi(\gamma)^{-1}j(\gamma,\tau)^-k f(\gamma\tau).
\]

A modular form of weight $k\in\R$ with multiplier system $\chi$ on a subgroup $\Gamma\subseteq\SL_2(\Z)$ is a holomorphic function $f:\H\to\C$ satisfying these criteria:
\begin{enumerate}[label=(\alph*)]
\item For every $\gamma\in\Gamma$, we have
\begin{align*}
f|_{k,\chi}\gamma(\tau)=f(\tau);
\end{align*}

\item $f$ is bounded towards every cusp (i.e. images of $i\infty$ by $\SL_2(\Z)$). In other words, for every cusp $\varrho$, let $\gamma_\varrho$ be the element in $\SL_2(\Z)$ that maps $i\infty$ to $\varrho$, then the function $f_\varrho = f|_{k,\chi}\gamma_\varrho(\tau)$ is bounded when $\tau\to i\infty$.
\end{enumerate}

Furthermore, if $f$ vanishes at every cusp (i.e. $\lim\limits_{\tau\to i\infty} f_\varrho(\tau)=0$), then $f$ is called a cusp form.

\subsubsection{Half-integral weight modular forms}

The modular forms of particular interest in this paper are those with half-integral weight, that is, $k\in\frac{1}{2}+\N_0$, and on the subgroup 
\[ \Gamma=\Gamma_1(N):=\Big\{\begin{pmatrix}a&b\\c&d\end{pmatrix} \in\SL_2(\Z) \quad \Big| \quad c\equiv 0\pmod{N}, a\equiv d\equiv 1\pmod{N} \Big\},\]
where $N$ is an integer divisible by 4. The multiplier systems are given explicitly in \cite[Proposition 2.1]{Shimura1/2}, but it can be eliminated (i.e. collapse to the trivial multiplier system) by a substitution $\tau\to N\tau$, so we need not study them in detail.

In particular, for the modular forms concerned here, we have $T^N\in\Gamma$ for some $N\in\N$, where $T=(\begin{smallmatrix}1&1\\0&1\end{smallmatrix})$. Then we have $f(\tau+N)=f(\tau)$. This implies $f$ is periodic, and hence possesses a Fourier expansion
\begin{align*}
f(\tau)=\sum\limits_{n\geq 0} a_ne^\frac{2\pi in\tau}{N}.
\end{align*}
This Fourier expansion form is used throughout this paper. For simplicity, one usually adopts the notation $q:=2\pi i\tau$ to simplify the expression.

\subsection{Lattice Theory} An alternative approach to the problem is to make use of lattice theory. Here we follow the notation in \cite{HaenschKane, Omeara}. For a lattice $L$, we denote the underlying quadratic space by $V=\Q L$, and its localisation to prime $p$ by $L_p=L\otimes_\Z \Z_p$. Here, $L_p$ is a $\Z_p$-lattice corresponding to the space $V_p:=V\otimes_\Q\Q_p$.

Given a vector $\nu\in V$, we can define a lattice coset $L+\nu$, which simply consists of vectors in the form $x+\nu$, with $x\in L$. In order to present the Siegel-Weil formula, and its conjectured extension in \cite[Conjecture 1.3]{HaenschKane}, we need the notions of class, spinor genus, as well as genus of a lattice coset. We define the class of $L+\nu$ to be
\begin{align*}
\cls(L+\nu) = \text{ the orbit of } L+\nu \text{ under the action of }O(V), 
\end{align*}
the spinor genus of $L+\nu$ to be
\begin{align*}
\spn(L+\nu) = \text{ the orbit of } L+\nu \text{ under the action of }O(V)O'_\mathbb{A}(V), 
\end{align*}
and the genus of $L+\nu$ to be
\begin{align*}\label{gencosetdef}
\gen(L+\nu) = \text{ the orbit of } L+\nu \text{ under the action of }O_\mathbb{A}(V).
\end{align*}
Here $O_\mathbb{A}(V)$ denotes the adeles of $O(V)$. We denote by $\cls/\gen(L+\nu)$ (resp. $\cls/\spn(L+\nu)$) the set of class representatives of the spinor genus (resp. genus) of $L+\nu$.

In other words, if $M+\mu\in\gen(L+\nu)$, then for every prime $p$ there exists $X\in O(V_p)$ such that $(M+\mu)_p=X(L+\nu)_p$. Meanwhile, $O'\mathbb{A}(V)$ denotes the adeles of $O'(V)$, the kernel of the spinor norm map $\theta: O(V)\to \Q^\times / \Q^{\times^2}$. As introduced in \cite[p. 137]{Omeara}, let $X\in O(V)$ be a composition of symmetries $\tau_1\tau_2\cdots\tau_k$, the spinor norm of $X$ is defined to be
\begin{align*}
\theta(X):=\prod\limits_{i=1}^k Q(\tau_i)\Q^{\times^2},
\end{align*}
where $Q$ stands for the corresponding quadratic form in the quadratic space.

As the quadratic forms that are dealt with in this paper are diagonal, the lattice cosets $L+\nu$ and $L-\nu$ represent the same numbers. Hence, different from the notations set in \cite{Omeara, Xu}, we use $O(V)$ instead of $SO(V)$ as the definition of $\cls(L+\nu)$, reducing the number of classes by half.

\subsubsection{Hilbert Symbols and Hasse Algebra} Here we adopt the definition in \cite{Gerstein}. For prime $p$, and two nonzero elements $a,b\in\Q_p$, we define the Hilbert symbol $(a,b)_p$ as follows:
\[
(a,b)_p = 
\begin{cases}
1 &\quad\text{ if } ax^2+by^2 = 1 \text{ is solvable in }\Q_p,\\
-1 &\quad\text{ otherwise.}
\end{cases}
\]
For odd $p$, there is a simple formula for the computation of the Hilbert symbols. Write $a=p^\alpha u, b=p^\beta v$, where $u,v\in\mathfrak{u}$, the set of units in the ring of integers $\mathfrak{o}(\Q_p)$ of $\Q_p$. Then we have
\begin{align}\label{Hilbert}
(a,b)_p = (-1)^{\alpha\beta\frac{p-1}{2}}\Big(\frac{u}{p}\Big)^\beta\Big(\frac{v}{p}\Big)^\alpha,
\end{align}
where the brackets stand for the Legendre symbol. 

With Hilbert symbol, we can then define the Hasse symbol for a regular quadratic space $V\equiv\langle a_1, \cdots, a_n\rangle$ to be
\begin{align*}
S_pV = \prod_{i<j} (a_i, a_j)_p.
\end{align*}
Then, for a ternary regular quadratic $\Q_p$-space $V\cong\langle a,b,c\rangle$, it is known that \cite[Proposition 4.21]{Gerstein} 
\begin{align}\label{isocrit}
V \text{ is isotropic } \Leftrightarrow S_pV=(-1,-dV)_p,
\end{align}
where $dV$ stands for the discriminant of $V$.

\section{Local Conditions}

In this section, we find the criteria on $a,b,c,m$ such that every congruence class is represented. Consider the equation
\begin{align}
n=ap_m(x)+bp_m(y)+cp_m(z).
\end{align}
We can obtain a positive definite quadratic form by completing the square, which we denote by $Q'$: 
\begin{align}
\label{qform}
\nonumber
8(m-2)n+(a+b+c)(m-4)^2 &= a\phi_m(x)^2 + b\phi_m(y)^2 + c\phi_m(z)^2\\
:&=Q'(\phi_m(x), \phi_m(y),\phi_m(z)),
\end{align}
where $\phi_m(x):=2(m-2)x-(m-4)$.

We then prove the following propositions on the $p$-adic representations:
\begin{proposition}\label{localobsodd}
For any odd prime $p$, $P_{a,b,c,m}$ represents every integer $p$-adically, if and only if one of the following holds:
\begin{enumerate}
\item $m\equiv 2\pmod{p}$, and $p\nmid\gcd(a,b,c)$;
\item $m\not\equiv 2\pmod{p}, p\nmid a, p\nmid b$ and $p\nmid c$; or
\item $m\not\equiv 2\pmod{p}$, $p$ divides exactly one of $a,b,c$ (without loss of generality we assume $p|c$),
 and if $p^q||c$ with $q$ odd, then
\[
\left(\cfrac{a}{p}\right)=\left(\cfrac{-b}{p}\right)
\]
has to hold as well.
\end{enumerate}
\end{proposition}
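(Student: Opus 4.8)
The plan is to translate the $p$-adic representability of $P_{a,b,c,m}$ into a statement about the diagonal ternary form $Q'=\langle a,b,c\rangle$ obtained in \eqref{qform}, for which the isotropy criterion \eqref{isocrit} together with the Hilbert symbol formula \eqref{Hilbert} is tailor-made. The first dichotomy I would introduce is according to whether $p\mid(m-2)$. When $m\equiv 2\pmod p$ the identity \eqref{qform} degenerates, since $8(m-2)\equiv 0$, so instead I would work with $p_m$ directly. The derivative $p_m'(x)=(m-2)x-\tfrac{m-4}{2}\equiv 1\pmod p$ is a unit on all of $\Z_p$, so Hensel's lemma shows that $x\mapsto p_m(x)$ is a bijection $\Z_p\to\Z_p$; hence the set of $p$-adic values of $P_{a,b,c,m}$ equals $a\Z_p+b\Z_p+c\Z_p=\gcd(a,b,c)\Z_p$, which is all of $\Z_p$ precisely when $p\nmid\gcd(a,b,c)$. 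This establishes case (1) together with its necessity.

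When $m\not\equiv 2\pmod p$, the quantities $2(m-2)$ and $8(m-2)$ are $p$-adic units, so $\phi_m(x)=2(m-2)x-(m-4)$ maps $\Z_p$ onto $\Z_p$ and the shift $n\mapsto 8(m-2)n+(a+b+c)(m-4)^2$ is a bijection of $\Z_p$. Consequently the congruence restriction on the $\phi_m$-variables disappears, and $P_{a,b,c,m}$ represents every integer $p$-adically if and only if $Q'=\langle a,b,c\rangle$ represents every $p$-adic integer, which I would detect through isotropy of $Q'$ over $\Q_p$ via \eqref{isocrit}. I would then split according to how many of $a,b,c$ lie in $p\Z_p$. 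If none do (case (2)), $Q'$ is unimodular: for any target $N$ the reduction $Q'\equiv N\pmod p$ has a nonsingular solution—using that a nondegenerate ternary form over $\mathbb{F}_p$ is isotropic to supply a nonzero gradient when $N\equiv 0$—and Hensel's lemma lifts it, so every residue is represented. If two or three of the coefficients are divisible by $p$, then $Q'\bmod p$ collapses to $aX^2$ (resp. $0$), which fails to represent the non-$a$-square units (resp. any unit); these are exactly the cases omitted from the statement, and they exhibit a genuine congruence obstruction, so the conditions are necessary here.

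The heart of the matter is the case where $p$ divides exactly one coefficient, say $p^q\,\|\,c$ with $c=p^qc'$ and $a,b,c'$ units. Here I would compute the Hasse invariant and discriminant of $V=\langle a,b,c\rangle$ from \eqref{Hilbert}: since $(u,v)_p=1$ for units $u,v$ and $(a,p)_p=\left(\frac ap\right)$, one obtains $S_pV=\left(\frac ap\right)^q\left(\frac bp\right)^q=\left(\frac{ab}{p}\right)^q$, whereas $dV\equiv abc'p^q$ yields $(-1,-dV)_p=\left(\frac{-1}{p}\right)^q$. The criterion \eqref{isocrit} then becomes $\left(\frac{-ab}{p}\right)^q=1$, which holds automatically when $q$ is even and is equivalent to $\left(\frac ap\right)=\left(\frac{-b}{p}\right)$ when $q$ is odd, giving case (3); in the remaining anisotropic sub-case ($q$ odd with $\left(\frac ap\right)\neq\left(\frac{-b}{p}\right)$) the form fails to represent one square class, which produces an unrepresented congruence class and hence necessity.

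The step I expect to be the main obstacle is this one-divisor computation: correctly tracking the exponent $q$ through the Hilbert symbols $(a,c)_p$ and $(b,c)_p$, and verifying carefully that the isotropy dichotomy coming out of \eqref{isocrit} is exactly the representability dichotomy asserted for the three congruence classes. The other cases reduce either to the elementary linear bijection argument of the first paragraph or to a one-line reduction of $Q'$ modulo $p$, so the combinatorial bookkeeping of the Hasse symbol in the mixed case is where the essential content—and the parity condition on $q$—is concentrated.
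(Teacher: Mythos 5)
Your overall architecture matches the paper's proof: split on whether $p\mid m-2$, handle the degenerate case by a Hensel-type lifting, and reduce the remaining cases to the Hasse-symbol computation for $\langle a,b,c\rangle$ via \eqref{isocrit}. Your treatment of case (1) (showing $p_m$ is a $p$-adic bijection because $p_m'\equiv 1\pmod p$, so the value set is $\gcd(a,b,c)\Z_p$) is cleaner than the paper's direct expansion of \eqref{qform}, and your explicit discussion of necessity in the excluded configurations is a genuine improvement, since the paper only argues sufficiency case by case.

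However, in case (3) you --- exactly like the paper --- deduce ``represents every $p$-adic integer'' from ``$V=\langle a,b,c\rangle$ is isotropic over $\Q_p$,'' and this inference fails at the lattice level: isotropy of the quadratic space does not imply that the specific $\Z_p$-lattice $\langle a,b,c\rangle$ represents all of $\Z_p$ integrally, which is what the congruence conditions require. (In case (2) you correctly patch this with a nonsingular-point-plus-Hensel argument; no such patch is supplied in case (3).) Concretely, take $p=3$ and $(a,b,c)=(1,1,9)$, so $q=2$ is even and the criterion imposes no Legendre condition; the space $\langle 1,1,9\rangle\cong\langle 1,1,1\rangle$ is isotropic over $\Q_3$, yet $x^2+y^2+9z^2\equiv 3\pmod 9$ has no solution (sums of two squares modulo $9$ omit $3$ and $6$), so the congruence class of $n$ with $\ell_n\equiv 3\pmod 9$ is locally obstructed. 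The structural reason is that when $-ab$ is a nonsquare mod $p$ the unimodular binary part $\langle a,b\rangle$ represents only elements of even order, so elements of odd order less than $q$ are missed whenever $q\geq 2$, irrespective of the parity of $q$. Your argument (and indeed the statement itself) is sound when $q=1$, and more generally whenever $q$ is odd or $\left(\frac{a}{p}\right)=\left(\frac{-b}{p}\right)$; to close the gap one must work directly with the Jordan splitting $\langle a,b\rangle\perp\langle c\rangle$ rather than with isotropy of the ambient space.
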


\begin{proof}\quad
\begin{enumerate}
\item
Write $m-2=p^qr$, with $p\nmid r$. Wlog assume $p\nmid c$. Expanding equation(\ref{qform}) modulo $p^k$, and subtracting $(a+b+c)(m-4)^2$ from both sides, we obtain
\begin{align*}
8p^qrn &\equiv 4(ax^2+by^2+cz^2)p^{2q}r^2 - 4(m-4)(ax+by+cz)p^qr \pmod{p^k}.\\
\intertext {dividing through $p^q$ gives}
8rn &\equiv 4(ax^2+by^2+cz^2)p^qr^2 - 4(m-4)(ax+by+cz)r \pmod{p^{k-q}}.
\end{align*}
Since $p\nmid 4(m-4)r$, the term $4(m-4)(ax+by+cz)r$ runs through every residue classes modulo $p$. Hence, for some $c_0\in\Z$, the equation
\begin{align*}
8rn &\equiv 4(ax^2+by^2+cz^2)p^qr^2 - 4(m-4)(ax+by+cz)r \pmod{p}
\end{align*}
is satisfied. Then, by replacing $c_0$ with $c_0+c_1p$ for some $c_1\in\Z$, the equation 
\begin{align*}
8rn &\equiv 4(ax^2+by^2+cz^2)p^qr^2 - 4(m-4)(ax+by+cz)r \pmod{p^2}
\end{align*}
can be satisfied. Repeating this until the power of $p$ reaches $k$ implies the statement.

\item \label{qred}
When $m\not\equiv 2\pmod{p}$, we have $Q'\cong \langle a,b,c\rangle$. Computing the Hilbert symbols yields
\begin{align*}
S_pQ' = (a,b)_p(a,c)_p(b,c)_p = 1 = (-1, -abc)_p.
\end{align*}
So $Q'$ is isotropic and hence universal in $\Q_p$.

\item
Wlog assume $c=p^q u$, where $u\in\mathfrak{u}$. Computing the Hilbert symbols yields
\begin{align*}
S_pQ' = (a,b)_p(a,c)_p(b,c)_p = \Big(\frac{a}{p}\Big)^q\Big(\frac{b}{p}\Big)^q,
\end{align*}
while
\begin{align*}
(-1, -abc)_p = \Big(\frac{-1}{p}\Big)^q.
\end{align*}
Hence, $Q'$ is universal in $\Q_p$ if and only if
\begin{align*}
\Big(\frac{a}{p}\Big)^q\Big(\frac{b}{p}\Big)^q = \Big(\frac{-1}{p}\Big)^q,
\end{align*}
which trivially holds for $q$ even, and is equivalent to the stated extra condition when $q$ is odd.
\end{enumerate}
\end{proof}

Meanwhile, for $p=2$, the permissible combinations of $(a,b,c,m)$ are less clearly given, but we can determine whether there is a local obstruction by finite checking.

\begin{proposition}\label{localobs2} For $p=2$, we have the following:
\begin{enumerate}
\item
If $4\nmid m$, then $P_{a,b,c,m}$ has no local $2$-adic obstruction if and only if $2\nmid\gcd(a,b,c)$. 
\item
If $4|m$, then $P_{a,b,c,m}$ has no local $2$-adic obstruction if and only if $(a,b,c)\pmod{8}$ is one of the following tuples (wlog we assume $|a|\leq |b|\leq |c|$):
\begin{align*}
&\pm(1,1,3) & &\pm(1,1,6) & &\pm(1,1,7) & &\pm(1,2,5) & &\pm(1,2,7) & &\pm(1,3,3)\\
&\pm(1,3,5) & &\pm(1,3,7) & &\pm(2,3,3) & &\pm(2,3,5) & &\pm(3,3,5)
\end{align*}
\end{enumerate}
\end{proposition}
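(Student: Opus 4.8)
The plan is to translate the statement, via the completing-the-square identity \eqref{qform}, into a purely local representation question and then split according to the $2$-adic valuations of $m-2$ and $m-4$. Write $\ell_n := 8(m-2)n + (a+b+c)(m-4)^2$ and let $R_m := -(m-4) + 2(m-2)\Z_2$ be the image of $\phi_m$ on $\Z_2$. Then $P_{a,b,c,m}$ has no local $2$-adic obstruction if and only if, for every $n$, the value $\ell_n$ is represented over $\Z_2$ by $au^2+bv^2+cw^2$ with $u,v,w\in R_m$. The first reduction is to dispose of the necessity of $2\nmid\gcd(a,b,c)$: if $2\mid\gcd(a,b,c)$, then $P_{a,b,c,m}$ takes only even values, so no odd $n$ is represented even $2$-adically. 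This settles the necessity direction of part (1) and is automatically subsumed in part (2), since a form $\langle a,b,c\rangle$ with all entries even cannot represent an odd $2$-adic integer.

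For the case $4\nmid m$ I would compute $R_m$ explicitly. When $m$ is odd one has $v_2(m-2)=v_2(m-4)=0$, so $R_m$ is the set of all odd $2$-adic integers; when $m\equiv 2\pmod 4$ one has $v_2(m-4)=1$ and $v_2(m-2)\geq 2$, so $R_m = 2u_0 + 2^{1+v_2(m-2)}\Z_2$ for a fixed odd $u_0$. The key point is that squaring collapses these cosets onto a single coset of a suitable $2^N\Z_2$ inside $(\Z_2^\times)^2 = 1+8\Z_2$: for $m$ odd this is all of $1+8\Z_2$, while for $m\equiv 2\pmod 4$ it is a proper subcoset determined by $v_2(m-2)$. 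I would then match this against $\ell_n$, checking that \eqref{qform} forces $\ell_n$ into exactly the residue class (modulo the appropriate power of $2$) that is swept out by $au^2+bv^2+cw^2$; the matching is governed by $(m-4)^2$ and $u_0^2$ being congruent. Consequently, once $2\nmid\gcd(a,b,c)$ (so that $a\Z_2+b\Z_2+c\Z_2=\Z_2$), the admissible squares cover the full coset containing each $\ell_n$, and every $n$ is represented. This establishes part (1).

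For the case $4\mid m$ the substitution is rigid: here $v_2(m-2)=1$ and $v_2(m-4)\geq 2$, so $R_m = 4\Z_2$ and $\{u^2 : u\in R_m\} = \{16w^2 : w\in\Z_2\}$. Writing $\ell_n = 16\bigl(sn + (a+b+c)t^2\bigr)$ with $s$ odd, the quotient $\ell_n/16$ runs over every residue class modulo $2^k$, for all $k$, as $n$ varies. Hence the absence of a $2$-adic obstruction is equivalent to the condition that $\langle a,b,c\rangle$ represent all of $\Z_2$ integrally. This condition depends on $a,b,c$ only through their residues modulo $8$ (the square classes of $\Z_2^\times$ are detected modulo $8$, and Hensel lifting reduces integral representability over $\Z_2$ to representability modulo a fixed power of $2$), so it becomes a finite verification: enumerating all triples in $(\Z/8\Z)^3$ up to reordering and overall sign, and deciding $2$-adic universality for each, should produce precisely the eleven families listed in part (2).

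The main obstacle is this finite determination in part (2). Deciding when a diagonal ternary form is $\Z_2$-universal is delicate because integral representability over $\Z_2$ is strictly finer than $\Q_2$-isotropy; for instance $\langle 1,1,1\rangle$ is anisotropic over $\Q_2$ and misses the class $7\pmod 8$. One must therefore combine the Hasse-symbol criterion \eqref{isocrit} at $p=2$ with an explicit check of which unit classes modulo $8$ are hit. A secondary subtlety, occurring in the case $4\nmid m$, is making the coset-matching between $R_m$ and $\ell_n$ rigorous when $v_2(m-2)$ is large, where one must verify that the restriction placed on the admissible squares $u^2$ is exactly compensated by the restriction that \eqref{qform} places on $\ell_n$.
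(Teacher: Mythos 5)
Your proposal is correct and follows essentially the same route as the paper: both pass to the completed-square form \eqref{qform}, dispose of $4\nmid m$ by dividing out the appropriate power of $2$ and matching congruence classes, and for $4\mid m$ divide by $16$ to reduce the question to $\Z_2$-universality of the diagonal form $\langle a,b,c\rangle$, which is then settled by a finite computation modulo a fixed power of $2$ (the paper uses $2^5$). Your write-up is in fact more explicit than the paper's (which dismisses the $4\nmid m$ cases as ``trivial''/``easy to see''), and your flagged subtlety about $\Z_2$-universality being finer than $\Q_2$-isotropy is exactly why the paper resorts to the mod-$32$ lifting check rather than the Hasse-symbol criterion.
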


\begin{proof}
For odd $m$, the result is trivial, by expanding equation (\ref{qform}) modulo $2^k$.
\quad\\
For $m\equiv 2\pmod{4}$, then $\phi_m(x)\equiv 2\pmod{4}$. Then we can divide both sides (\ref{qform}) by 4:
\begin{align}\label{qformd4}
2(m-2)n+(a+b+c)\Big(\cfrac{m-4}{2}\Big)^2=a\Big(\cfrac{\phi_m(x)}{2}\Big)^2+b\Big(\cfrac{\phi_m(y)}{2}\Big)^2+c\Big(\cfrac{\phi_m(z)}{2}\Big)^2.
\end{align}
Then the result is easy to see by expanding equation (\ref{qformd4}) modulo $2^k$.
\quad\\
If $4|m$, then $4|\phi_m(x)$. Then, we can divide both sides of the equation by 16:
\begin{align*}
\Big(\cfrac{m-2}{2}\Big)n+(a+b+c)\Big(\cfrac{m-4}{4}\Big)^2=a\Big(\cfrac{\phi_m(x)}{4}\Big)^2+b\Big(\cfrac{\phi_m(y)}{4}\Big)^2+c\Big(\cfrac{\phi_m(z)}{4}\Big)^2.
\end{align*}
Notice that $\phi_m(x)/4$ runs through all congruence classes modulo $2^k$. So the problem becomes whether 
\begin{align*}
ax^2+by^2+cz^2\equiv s\pmod{2^k}
\end{align*}
is solvable for every value of $s$ and for all $k\in\N$.\\

This problem can be solved by showing the existence of a solution that can be lifted for some $k\in\N$. By computation, the smallest $k$ that works for all the cases is $k=5$. Checking the permissible combinations yields the list shown in the proposition statement.
\end{proof}

With the two propositions above, given any form $P_{a,b,c,m}$, we can easily check if there is a local obstruction. In particular, it suffices to check for local obstructions for $p=2$ as well as prime divisors of $a,b,c$.

\section{Generating Modular Forms}

The aim of the section is to prove Theorem \ref{aucrit}. Here we assume the form $P_{a,b,c,m}$ has no local obstructions. To start with, we write $N:=2(m-2)$, $\ell_n=8(m-2)n+(a+b+c)(m-4)^2$, and $r(\ell_n)$ be the number of solutions $(x,y,z)$ to equation (\ref{qform}). Consider the generating function:
\begin{align*}
\Theta(\tau)=\sum\limits_{n\geq 0}r(\ell_n)q^\frac{\ell_n}{2N}.
\end{align*}
From  \cite[Proposition 2.1]{Shimura1/2}, the theta function
\begin{align*}
\theta(\tau) = \sum_{x\equiv -(m-4)\pmod{N}} q^\frac{x^2}{2N}
\end{align*}
is a weight $1/2$ modular form on $\Gamma_1(4(m-2))$ with some multipliers.
By the transformation $\theta_a(\tau)=\theta(a\tau)$, we obtain that
\begin{align*}
\theta_a(\tau) = \sum_{x\equiv -(m-4)\pmod{N}} q^\frac{ax^2}{2N}
\end{align*}
is a weight $1/2$ modular form on $\Gamma_1(2aN)$ with some multipliers.
Hence, the product
\begin{align}
\Theta(\tau)=\theta_a(\tau)\theta_b(\tau)\theta_c(\tau)= \sum_{x,y,z\equiv -(m-4)\pmod{N}} q^\frac{ax^2+by^2+cz^2}{2N}
\end{align}
is a weight $3/2$ modular form on $\Gamma_1(2wN)$, where $w:=\lcm(a,b,c)$, with some multipliers.

We follow the approach of Haensch and Kane \cite{HaenschKane}, and decompose $\Theta(\tau)$ into three parts:
\begin{align}
\Theta(\tau)=\mathcal{E}(\tau)+\mathcal{U}(\tau)+f(\tau),
\end{align}
where $\mathcal{E}$ is a linear combination of Eisenstein series, $\mathcal{U}$ is a linear combination of unary theta functions, and $f(\tau)$ is a cusp form that is orthogonal to unary theta functions. Clearly, each part has the same modularity as $\Theta(\tau)$.

\subsection{Coefficients of Eisenstein series}
Let $Q(x,y,z)$ be a ternary quadratic form, and $\mathcal{N}$ be an infinite subset of $\N$. A prime $p$ is called anisotropic if the equation $Q(x,y,z)=0$ has no non-trivial solutions in $\Z_p$, the ring of $p$-adic integers. Kane and Sun \cite{KaneSun} showed that if $\mathcal{N}$ has bounded divisibility at every anisotropic prime (i.e. for all anisotropic primes $p_i$, there exists corresponding indices $k_i$ such that $p_i^{k_i}\nmid n$ for all $n\in \mathcal{N}$), then the $n$-th Fourier coefficient of the Eisenstein series $\mathcal{E}$ corresponding to the quadratic form (as defined by Siegel-Weil formula) is greater than $n^{1/2-\epsilon}$ for sufficiently large $n\in\mathcal{N}$. To establish a lower bound of Fourier coefficients of $\mathcal{E}$, we need the following lemma:

\begin{lemma}\label{anisotropic}
If the form $P_{a,b,c,m}$ has no local obstructions, then the set
\begin{align*}
\mathcal{N}':=\{8(m-2)n+(a+b+c)(m-4)^2 | n\in\N\}
\end{align*}
has bounded divisibility at every anisotropic prime for the quadratic form  $Q'(x,y,z):= ax^2+by^2+cz^2$.
\end{lemma}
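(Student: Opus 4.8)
The plan is to recognise that $\mathcal{N}'$ is simply the arithmetic progression $\{Dn+C : n\in\N\}$ with common difference $D:=8(m-2)$ and shift $C:=(a+b+c)(m-4)^2$, and to convert ``bounded divisibility at $p$'' into a comparison of $p$-adic valuations. A direct inspection of the progression shows that $\ord_p$ is bounded on $\mathcal{N}'$ exactly when $\ord_p(C)<\ord_p(D)$. Indeed, if $p\nmid D$ then $Dn+C\equiv 0\pmod{p^k}$ is solvable for every $k$, so the valuation is unbounded; if $p\mid D$ but $\ord_p(C)\geq\ord_p(D)=:\alpha$, then writing $Dn+C=p^\alpha(D_0n+C_0)$ with $p\nmid D_0$ reduces us to a progression of unit common difference, again giving unbounded valuation; while if $\ord_p(C)<\ord_p(D)$ then $\ord_p(Dn+C)=\ord_p(C)$ for all $n$. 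Thus it suffices to prove that every anisotropic prime $p$ of $Q'=\langle a,b,c\rangle$ satisfies $\ord_p(C)<\ord_p(D)$, or contrapositively that $\ord_p(C)\geq\ord_p(D)$ forces $p$ to be isotropic.

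For odd $p$ I would argue as follows. If $p\nmid abc$ then, as in Proposition \ref{localobsodd}(2), $S_pQ'=1=(-1,-abc)_p$, so by \eqref{isocrit} $Q'$ is isotropic; hence any anisotropic odd $p$ divides $abc$. Combined with the no-obstruction hypothesis this forces case (1) of Proposition \ref{localobsodd}, namely $m\equiv 2\pmod p$ and $p\nmid\gcd(a,b,c)$: cases (2)--(3) either exclude $p\mid abc$ or (case (3)) impose exactly the Legendre condition $(a/p)=(-b/p)$ that the proof of Proposition \ref{localobsodd}(3) shows to be the isotropy condition, contradicting anisotropy. Now $m\equiv 2\pmod p$ gives $\ord_p(D)=\ord_p(m-2)\geq 1$ and $p\nmid(m-4)$ (since $m-4\equiv-2$), so $\ord_p(C)=\ord_p(a+b+c)$. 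It remains to see that $p\nmid(a+b+c)$: if $p$ divides exactly two of $a,b,c$ the third is a unit and $a+b+c$ is a unit mod $p$; if $p$ divides exactly one, say $c$, anisotropy forces $-ab$ to be a nonsquare mod $p$, which prevents $a+b\equiv0\pmod p$. In either case $\ord_p(C)=0<\ord_p(D)$.

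The prime $p=2$ is the main obstacle, and I would handle it through the structural lemma that if $2\nmid\gcd(a,b,c)$ and $a+b+c\equiv0\pmod 8$, then $Q'$ is isotropic over $\Q_2$. To see this, note that such a sum is even, so exactly two of $a,b,c$ are odd; choosing an odd coefficient, say $a$, the other two satisfy $\ord_2(b+c)=0$, and $-(b+c)/a\equiv 1\pmod 8$ precisely because $a+b+c\equiv0\pmod 8$, hence it is a square in $\Z_2$ and $(x,1,1)$ with $ax^2=-(b+c)$ is a nontrivial zero. Consequently an anisotropic $p=2$ must have $a+b+c\not\equiv0\pmod 8$, i.e. $\ord_2(a+b+c)\leq 2$. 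Splitting on $m\bmod 4$ then closes the argument: for $m$ odd, $\ord_2(D)=3$ while $\ord_2(C)=\ord_2(a+b+c)\leq 2$; for $m\equiv 2\pmod 4$, $\ord_2(m-4)=1$ and $\ord_2(m-2)\geq 2$, so $\ord_2(C)\leq 4<5\leq\ord_2(D)$. Finally, when $4\mid m$ the no-obstruction condition of Proposition \ref{localobs2}(2) is exactly that $ax^2+by^2+cz^2$ represents every residue modulo $2^k$ for all $k$, i.e. that $Q'$ is $\Z_2$-universal; since an anisotropic ternary form over $\Q_2$ omits an entire square class, $Q'$ is then forced to be isotropic at $2$, so $2$ is not anisotropic and nothing needs checking.

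The delicate content is therefore entirely local at $2$: establishing the $\pmod 8$ isotropy lemma above and observing that the $4\mid m$ obstruction list encodes $\Z_2$-universality. At odd primes the work is lighter, since anisotropy already pins down $m\equiv 2\pmod p$ and a unit value of $a+b+c$ modulo $p$, which immediately beats $\ord_p(D)\geq 1$.
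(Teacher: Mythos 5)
Your proof is correct, and it reorganizes the argument in a way that is genuinely different from the paper's. The paper proceeds prime by prime through the no-obstruction classification and shows in each case that one of three alternatives holds: $Q'$ is isotropic over $\Q_p$, or $Q'(x,y,z)\equiv 0\pmod{p^k}$ admits a primitive solution for every $k$ (so $p$ is not anisotropic), or $\mathcal{N}'$ has bounded divisibility at $p$; the middle alternative is carried by explicit lifting of primitive solutions modulo $p$ (respectively modulo $8$). You instead reduce bounded divisibility to the single valuation inequality $\ord_p\big((a+b+c)(m-4)^2\big)<\ord_p\big(8(m-2)\big)$ and replace each use of the lifting alternative by an outright isotropy statement: at odd $p$, that $-ab$ being a square forces the binary subform $\lan a,b\ran$ to be isotropic, and at $p=2$, that $a+b+c\equiv 0\pmod 8$ with $2\nmid\gcd(a,b,c)$ makes $-(b+c)/a\equiv 1\pmod 8$ a $2$-adic square. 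These are essentially contrapositives of the paper's lifting arguments (your square root is the $2$-adic limit of their lifted solutions), so the content there is close; but your handling of $4\mid m$ is genuinely slicker: instead of checking the eleven tuples of Proposition \ref{localobs2} for a primitive zero modulo $8$, you observe that the no-obstruction condition in that case is precisely $\Z_2$-universality of $Q'$, and since an anisotropic ternary form over $\Q_2$ omits an entire square class (which contains positive integers), universality already forces isotropy with no finite check. The only ingredient you use that the paper does not is this classical fact about anisotropic ternary spaces over local fields (they represent every nonzero square class except that of $-dQ'$); everything else --- the parity count showing exactly two of $a,b,c$ are odd, the valuation bookkeeping for $m$ odd and for $m\equiv 2\pmod 4$, and the deduction that an odd anisotropic prime must land in case (1) of Proposition \ref{localobsodd} with $a+b+c$ a $p$-adic unit --- checks out.
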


We prove Lemma \ref{anisotropic} in a series of lemmas below, where we consider different cases depending on the  divisibility of $abc$ and $m-2$ by the given primes. For any prime $p$ and $k\in\N$, a solution $(x_0,y_0,z_0)\in(\Z/p^k\Z)^3$ to $Q'(x,y,z)=n\pmod{p^k}$ is called \textit{primitive} if $\gcd(x_0,y_0,z_0)$ is relatively prime to $p$ and {\it imprimitive} otherwise. Clearly, if $p$ is anisotropic, then the equation
\[
Q'(x,y,z)\equiv 0\pmod{p^k}
\]
can have no primitive solutions for large enough $k$. Therefore, to show that $p$ is not anisotropic, it suffices to show that a primitive solution exists for every $k\in\N$. Our strategy to prove Lemma \ref{anisotropic} is to prove that for every prime $p$, either
\begin{enumerate}[label = (\roman*)]
\item $Q'$ is isotropic in $\Q_p$;
\item there exists a primitive solution to $Q'(x,y,z)\equiv 0\pmod{p^k}$ for every $k\in\N$; or
\item $\mathcal{N}'$ has bounded divisibility at $p$.
\end{enumerate}

We begin by considering the case when $p\nmid abc$.

\begin{lemma}\label{seriesstart}
If $p$ is odd and $p\nmid a,b,c$, then $p$ is not anisotropic.
\end{lemma}
\begin{proof}
Invoking (\ref{isocrit}), we obtain that
\begin{align*}
S_pQ' = 1 = (-1, -dQ')_p.
\end{align*}
Hence $Q'$ is isotropic in $\Q_p$, so $p$ is not anisotropic.
\end{proof}

\begin{lemma}
If $p$ is odd, $m\not\equiv 2\pmod{p}$, $p$ divides exactly one of $a,b,c$ (without loss of generality we assume $p|c$),
 and 
\[
\left(\cfrac{a}{p}\right)=\left(\cfrac{-b}{p}\right)
\]
when $p^q||c$ with $q$ odd, then $p$ is not anisotropic.
\end{lemma}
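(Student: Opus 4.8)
The plan is to establish alternative (i) of the three-part strategy for Lemma \ref{anisotropic}, namely to show that $Q'$ is isotropic in $\Q_p$; this is the same computation already carried out in case (3) of Proposition \ref{localobsodd}, now read through the lens of the isotropy criterion (\ref{isocrit}). Since $m\not\equiv 2\pmod{p}$, the factor $8(m-2)$ in (\ref{qform}) is a $p$-adic unit, so completing the square gives $Q'\cong\langle a,b,c\rangle$ over $\Q_p$, and the problem reduces to a single Hasse-symbol calculation.

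First I would write $c=p^q u$ with $u$ a $p$-adic unit, noting that $a$ and $b$ are units because $p$ divides only $c$. Using the explicit formula (\ref{Hilbert}) for the Hilbert symbol at an odd prime, the cross terms collapse to $(a,b)_p=1$, $(a,c)_p=\left(\frac{a}{p}\right)^q$, and $(b,c)_p=\left(\frac{b}{p}\right)^q$, whence
\[
S_pQ' = \left(\frac{a}{p}\right)^q\left(\frac{b}{p}\right)^q.
\]
Similarly, since $-dQ'=-abc$ has valuation $q$ and unit part $-abu$, I would compute $(-1,-dQ')_p=\left(\frac{-1}{p}\right)^q$. Applying the isotropy criterion (\ref{isocrit}), $Q'$ is isotropic in $\Q_p$ precisely when
\[
\left(\frac{a}{p}\right)^q\left(\frac{b}{p}\right)^q=\left(\frac{-1}{p}\right)^q.
\]

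Finally I would split on the parity of $q$. When $q$ is even, both sides equal $1$ and the identity is automatic, so $Q'$ is isotropic with no further hypothesis. When $q$ is odd, the identity reduces to $\left(\frac{a}{p}\right)=\left(\frac{-b}{p}\right)$, which is exactly the stated hypothesis of the lemma; hence $Q'$ is again isotropic. In either case alternative (i) holds, so $p$ is not anisotropic, as required. I do not anticipate a genuine obstacle here: the argument is a direct transcription of the local-solubility computation of Proposition \ref{localobsodd}(3), and the only bookkeeping point is to track the Hilbert-symbol exponents correctly and to observe that the even-$q$ case needs no hypothesis while the odd-$q$ case is governed entirely by the quadratic-residue condition in the statement.
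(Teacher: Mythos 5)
Your proposal is correct and follows essentially the same route as the paper: compute $S_pQ'=\left(\frac{a}{p}\right)^q\left(\frac{b}{p}\right)^q$ and $(-1,-dQ')_p=\left(\frac{-1}{p}\right)^q$, observe these agree (trivially for even $q$, and by the hypothesis $\left(\frac{a}{p}\right)=\left(\frac{-b}{p}\right)$ for odd $q$), and conclude isotropy from the criterion (\ref{isocrit}). Your write-up just spells out the individual Hilbert-symbol computations that the paper leaves implicit.
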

\begin{proof}
Assuming the conditions, we obtain that
\begin{align*}
S_pQ' =  \Big(\frac{a}{p}\Big)^q\Big(\frac{b}{p}\Big)^q = \Big(\frac{-1}{p}\Big)^q = (-1, -dQ').
\end{align*}
Hence $p$ is not anisotropic.
\end{proof}

\begin{lemma}\label{lem:p2modm-2}
If $p$ is odd, $m\equiv 2\pmod{p}$, $p|abc$ and $p\nmid\gcd(a,b,c)$, then $p$ is either not anisotropic or $\mathcal{N}'$ has bounded divisibility at $p$.
\end{lemma}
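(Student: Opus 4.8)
The plan is to separate two cases according to whether $p$ divides $a+b+c$. First I would record the reduction of $\ell_n=8(m-2)n+(a+b+c)(m-4)^2$ modulo $p$. Since $m\equiv 2\pmod p$ we may write $m-2=p^tr$ with $p\nmid r$ and $t\geq 1$, so that $p\mid(m-2)$ annihilates the term $8(m-2)n$; moreover $m-4\equiv -2\pmod p$ gives $(m-4)^2\equiv 4\pmod p$. Hence $\ell_n\equiv 4(a+b+c)\pmod p$ for every $n\in\N$, and whether or not $p\mid(a+b+c)$ dictates which of the three alternatives (i)--(iii) we can establish.

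In the case $p\nmid(a+b+c)$: since $p$ is odd, $4(a+b+c)\not\equiv 0\pmod p$, so $p\nmid\ell_n$ for all $n$. Thus $\mathcal{N}'$ has bounded divisibility at $p$ with bound $p^1$, which is alternative (iii). In the case $p\mid(a+b+c)$, I would first argue that $p$ divides exactly one of the weights. Indeed $p\mid abc$ forces $p$ to divide at least one and $p\nmid\gcd(a,b,c)$ forbids it from dividing all three; if it divided exactly two, say $b$ and $c$, then $p\mid(a+b+c)$ would give $p\mid a$, a contradiction. So without loss of generality $p\mid c$ and $p\nmid a,b$, and then $p\mid(a+b+c)$ yields $a+b\equiv 0\pmod p$, i.e.\ $b\equiv -a\pmod p$. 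Consequently $-ab\equiv a^2\pmod p$ is a nonzero quadratic residue, so $\left(\frac{-ab}{p}\right)=1$. Writing $c=p^qu$ with $u$ a unit and invoking \eqref{isocrit} exactly as in the preceding lemmas, one computes $S_pQ'=\left(\frac{a}{p}\right)^q\left(\frac{b}{p}\right)^q$ and $(-1,-dQ')_p=\left(\frac{-1}{p}\right)^q$; the identity $\left(\frac{ab}{p}\right)=\left(\frac{-1}{p}\right)$ makes these agree, so $Q'$ is isotropic over $\Q_p$ and $p$ is not anisotropic, which is alternative (i). (Equivalently and more directly, $-ab$ being a square in $\Z_p^\times$ by Hensel's lemma shows the binary subform $\langle a,b\rangle$ is already isotropic, whence so is $Q'$.)

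The main obstacle here is not any single computation but the case split itself: one must notice that $p\mid(a+b+c)$ is precisely the configuration in which bounded divisibility at $p$ must fail (since then $p\mid\ell_n$ for all $n$), so alternative (iii) is unavailable and isotropy must be produced instead. The key structural point is that this very hypothesis $p\mid(a+b+c)$, combined with $p\mid abc$ and $p\nmid\gcd(a,b,c)$, forces $p$ to divide a single weight and simultaneously forces the corresponding binary subform to be isotropic, so the two branches of the dichotomy dovetail exactly and no residual case survives.
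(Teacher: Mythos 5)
Your proof is correct and follows essentially the same route as the paper: the same dichotomy on whether $p$ divides $a+b+c$ (equivalently $a+b$ once one reduces to $p\mid c$), with bounded divisibility of $\mathcal{N}'$ in one branch and non-anisotropy coming from $b\equiv -a\pmod{p}$ in the other. The only difference is cosmetic --- the paper finishes by exhibiting primitive solutions of $Q'(x,y,z)\equiv 0\pmod{p^k}$ for every $k$, while you deduce isotropy over $\Q_p$ from the Hilbert-symbol criterion \eqref{isocrit} (or Hensel's lemma); both are among the three routes the paper lays out before Lemma \ref{seriesstart}.
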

\begin{proof}
Wlog we assume $p|c$. If $p\nmid a+b$, then it is easy to check that for all $n\in\mathcal{N}'$, we have $p\nmid n$, and hence we have a bounded divisibility at $p$. The other possibility is $a\equiv -b\pmod{p}$. Then, for any $k\in\N$ and $r\in\Z$, we can always find $x,y$ relatively prime to $p$ such that 
\[
ax^2+by^2\equiv rp\pmod{p^k}.
\]
Hence we obtain a primitive solution to $Q'(x,y,z)\equiv 0\pmod{p^k}$.
\end{proof}

\begin{lemma}
If $4\nmid m$, then 2 is not an anisotropic prime, or $\mathcal{N}'$ has bounded divisibility at 2.
\end{lemma}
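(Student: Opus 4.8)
The plan is to prove the lemma by setting up a clean dichotomy: if the form $Q'=\langle a,b,c\rangle$ is isotropic over $\Q_2$ then $2$ is not anisotropic and there is nothing left to do, so the real content is to show that whenever $Q'$ is \emph{anisotropic} over $\Q_2$, the set $\mathcal{N}'$ has bounded divisibility at $2$. Since $P_{a,b,c,m}$ has no local obstruction and $4\nmid m$, Proposition \ref{localobs2} gives $2\nmid\gcd(a,b,c)$, a fact I use throughout.

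First I would control $v_2$ of $\ell_n=8(m-2)n+(a+b+c)(m-4)^2$. Writing $A:=v_2(8(m-2))$ and $B:=v_2((a+b+c)(m-4)^2)$, the elementary observation is that $v_2(\ell_n)=B$ for every $n$ as soon as $B<A$ (the running term $8(m-2)n$ always has valuation $\geq A$), which gives bounded divisibility; if instead $B\geq A$ then, the odd part of $8(m-2)$ being a $2$-adic unit, $\ell_n/2^A$ runs through all residue classes and $v_2(\ell_n)$ is unbounded. Splitting $4\nmid m$ into $m$ odd (so $v_2(m-2)=v_2(m-4)=0$ and $A=3$) and $m\equiv 2\pmod 4$ (so $v_2(m-2)\geq 2$, $v_2(m-4)=1$ and $A\geq 5$), a short computation shows that in both cases $8\nmid a+b+c$ forces $B<A$. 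Hence bounded divisibility holds whenever $8\nmid a+b+c$, and only the case $8\mid a+b+c$ remains.

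For that case my claim is that $8\mid a+b+c$ together with $2\nmid\gcd(a,b,c)$ already forces $Q'$ to be isotropic over $\Q_2$, so $2$ is automatically not anisotropic. The first step is a parity reduction: a sum of three odds is odd and a sum of two evens and one odd is odd, so $8\mid a+b+c$ rules out the ``all odd'' and ``exactly two even'' patterns, leaving exactly one of $a,b,c$ even; without loss of generality this is $c$, with $a,b$ odd units. Isotropy of $\langle a,b,c\rangle$ is then equivalent to $\langle a,b\rangle$ representing $-c$, which via the Hilbert-symbol description of the values of a binary form reduces to the single identity $(-c,-ab)_2=(a,b)_2$; invoking (\ref{isocrit}) one may equivalently verify $S_2Q'=(-1,-abc)_2$ directly.

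The main obstacle is this last identity, which I would settle by a finite case analysis organised by $v_2(c)$ and the residues of $a,b\pmod 8$. The cases $v_2(c)\geq 3$ and $v_2(c)=2$ are clean: the constraint $8\mid a+b+c$ forces $ab\equiv 7\pmod 8$ in the former (so $\langle a,b\rangle$ is already isotropic), and $ab\equiv 3\pmod 8$ in the latter (so $\Q_2(\sqrt{-ab})$ is unramified, its norm group is exactly the elements of even valuation, and $-c$ with $v_2(-c)=2$ lies in it). The delicate case is $v_2(c)=1$: here one must use $8\mid a+b+c$ to force $ab\equiv 1$ or $5\pmod 8$, in particular excluding the dangerous unramified configuration $ab\equiv 3\pmod 8$ — for which $-c$ of \emph{odd} valuation would fail to be a norm — precisely because that configuration would require $v_2(c)\geq 2$; one then confirms $(-c,-ab)_2=(a,b)_2$ by a direct computation. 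Checking that this bookkeeping is exhaustive and that no admissible residue pattern escapes isotropy is where the care lies; the remaining steps are routine.
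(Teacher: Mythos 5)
Your proposal is correct, and its first half coincides with the paper's proof: both split on whether $8\mid a+b+c$, both get bounded divisibility at $2$ when $8\nmid a+b+c$ (your explicit valuation comparison $B<A$ in the two subcases $m$ odd and $m\equiv 2\pmod 4$ just makes precise what the paper asserts), and both reduce the remaining case to the situation where exactly one of $a,b,c$ is even. The divergence is in how that remaining case is closed. The paper exhibits a primitive solution of $Q'(x,y,z)\equiv 0\pmod{2^k}$ directly: starting from $(1,1,1)$ modulo $8$, it solves $ax^2\equiv -(b+c)\pmod{2^k}$ with $x$ odd, using that odd squares fill out the residue class $1\pmod 8$ modulo any power of $2$; this is a two-line lifting argument. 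You instead prove the stronger statement that $8\mid a+b+c$ together with $2\nmid\gcd(a,b,c)$ forces $\langle a,b,c\rangle$ to be isotropic over $\Q_2$, via the criterion $(-c,-ab)_2=(a,b)_2$ and a case analysis on $v_2(c)$ and residues modulo $8$. I checked your cases: for $v_2(c)\geq 3$ one gets $-ab\equiv 1\pmod 8$ (binary part already isotropic); for $v_2(c)=2$ one gets $ab\equiv 3\pmod 8$ and $(a,b)_2=1=(-c,-ab)_2$; for $v_2(c)=1$ the congruence forces $ab\equiv 1$ or $5\pmod 8$, and in both subcases the two Hilbert symbols agree (e.g.\ for $ab\equiv 1$ one has $b\equiv a\pmod 8$, $u\equiv -a\pmod 4$, and both symbols equal $(-1)^{\epsilon(a)}$). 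So the computation you defer to ``direct computation'' does go through, though you should actually write it out, since the $v_2(c)=1$ subcase is exactly where the congruence $8\mid a+b+c$ is doing real work. On balance the paper's lifting argument is shorter and avoids Hilbert-symbol bookkeeping entirely, while your route isolates the conceptual reason ($Q'$ is genuinely isotropic over $\Q_2$, not merely non-anisotropic in the ``primitive solutions exist'' sense) and is more in the spirit of the odd-prime lemmas that precede this one.
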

\begin{proof}
For the case $4\nmid m$, if $8\nmid a+b+c$, then $\mathcal{N}'$ has bounded divisibility at $2$. Meanwhile, if $8|a+b+c$, the assumption that $P_{a,b,c,m}$ has no local obstructions implies that exactly one number among $a,b,c$ is even (wlog we assume $2|c$). Clearly, we have a primitive solution to $Q'(x,y,z)\equiv 0\pmod{8}$, namely $(1,1,1)$. Notice that for odd $a$, the congruence equation
\[
ax^2\equiv 8r+a\pmod{2^k}
\]
has an odd solution for $x$ for all $r\in\Z$ and $k\in\N$. This implies that $Q'(x,y,z)\equiv 0\pmod{2^k}$ has a primitive solution for all $k\in\N$.
\end{proof}

\begin{lemma}\label{seriesend}
If $4|m$, and $P_{a,b,c,m}$ has no local 2-adic obstruction, then 2 is not an anisotropic prime.
\end{lemma}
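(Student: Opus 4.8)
The plan is to establish isotropy of $Q'=\langle a,b,c\rangle$ over $\Q_2$ directly through the isotropy criterion \eqref{isocrit} at $p=2$, exactly as in the odd-prime lemmas, rather than exhibiting primitive solutions by hand. Since $4\mid m$ and $P_{a,b,c,m}$ has no local $2$-adic obstruction by hypothesis, Proposition \ref{localobs2}(2) forces the tuple $(a,b,c)\bmod 8$ to lie in the explicit finite list recorded there, up to permutation and an overall sign. The key reduction is that whether $Q'$ is isotropic over $\Q_2$ depends only on the square classes of $a,b,c$ in $\Q_2^\times/(\Q_2^\times)^2$: the square class of an odd unit is determined by its residue mod $8$, and each even entry appearing in the list is $2$ times an odd unit, whose square class is again pinned down mod $8$. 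Moreover isotropy is unchanged under permuting $a,b,c$ and under replacing $Q'$ by $-Q'$, since the zero sets coincide, so the two signs $\pm$ collapse and it suffices to verify the eleven representative tuples.

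For each representative I would compute the Hasse symbol $S_2Q'=(a,b)_2(a,c)_2(b,c)_2$ and the quantity $(-1,-dQ')_2$ with $dQ'=abc$, and check that the two agree, which by \eqref{isocrit} is precisely the condition that $Q'$ is isotropic, hence that $2$ is not anisotropic. The $2$-adic Hilbert symbols are evaluated with the standard formulas $(u,v)_2=(-1)^{\frac{u-1}{2}\frac{v-1}{2}}$ for odd units $u,v$ and $(u,2)_2=(-1)^{\frac{u^2-1}{8}}$, extended by bimultiplicativity; note that no admissible tuple has two even entries, so the symbol $(2,2)_2$ never arises. As a sample case, for $(a,b,c)\equiv(2,3,3)$ the repeated factor $(2,3)_2$ cancels and one gets $S_2Q'=(3,3)_2=-1$, while $-dQ'=-18$ has square class $14$, giving $(-1,-dQ')_2=(-1,14)_2=-1$; the criterion holds and the form is isotropic. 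The remaining ten tuples are dispatched identically and all turn out isotropic.

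The only point requiring genuine care is that, unlike the odd primes governed by the clean Legendre-symbol expression \eqref{Hilbert}, the $2$-adic Hilbert symbol admits no such single formula, so one must track square classes mod $8$ attentively, in particular for the entries $\equiv 2,6\pmod 8$ where the power of $2$ feeds into the symbol through $(u,2)_2$. Once the eleven verifications are assembled into a short table, every admissible form satisfies $S_2Q'=(-1,-dQ')_2$ and is therefore isotropic over $\Q_2$, which completes the proof; equivalently, isotropy guarantees a primitive solution of $Q'(x,y,z)\equiv 0\pmod{2^k}$ for every $k\in\N$, matching the sufficient condition set out before Lemma \ref{seriesstart}.
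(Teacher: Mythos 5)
Your proposal is correct, but it takes a genuinely different route from the paper. The paper proves this lemma via option (ii) of the trichotomy stated before Lemma \ref{seriesstart}: for each admissible tuple it exhibits a primitive solution of $Q'(x,y,z)\equiv 0\pmod 8$ and lifts it to every modulus $2^k$ by solving $ax^2\equiv 8r+a\pmod{2^k}$ in an odd variable, exactly as in the $4\nmid m$ case. You instead establish option (i), isotropy of $\langle a,b,c\rangle$ over $\Q_2$ via the Hasse-invariant criterion \eqref{isocrit}, which has the virtue of making the argument uniform with the odd-prime lemmas; the eleven Hilbert-symbol verifications do all come out isotropic (your $(2,3,3)$ sample and several others check out), so the conclusion follows. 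One imprecision worth fixing: for an even entry $c\equiv 2,6\pmod 8$ the square class of $c$ in $\Q_2^\times/(\Q_2^\times)^2$ is \emph{not} determined by $c\bmod 8$ (e.g.\ $2$ and $10$ lie in different square classes, since $5$ is a nonsquare unit), so the list of tuples does not literally pin down the isometry class of $Q'$ over $\Q_2$. What saves the argument is that $S_2Q'$ and $(-1,-dQ')_2$ depend only on the odd entries modulo $8$ and on the odd part of the even entry modulo $4$, all of which \emph{are} determined by the mod-$8$ data; you should either say this explicitly or run the table over both possible square classes of the even entry. The trade-off: the paper's proof is elementary and constructive, while yours is shorter to state, reuses the machinery already set up for odd $p$, and directly yields isotropy rather than just non-anisotropy of the residue classes in question.
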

\begin{proof}
For the case $4|m$, the assumption that $P_{a,b,c,m}$ has no local obstructions implies that $a,b,c$ must be congruent modulo $8$ to one of the tuples listed in Proposition \ref{localobs2}. Similar to the case above, at most one number among $a,b,c$ is even (wlog we assume $2|c$); and by direct checking, for every relevant $a,b,c$, we also have a primitive solution to $Q'(x,y,z)\equiv 0\pmod{8}$. Using the same argument as in the $4\nmid m$ case, we obtain that $Q'(x,y,z)\equiv 0\pmod{2^k}$ has a primitive solution for all $k\in\N$.
\end{proof}

\begin{proof}[Proof of Lemma \ref{anisotropic}]
As Lemma \ref{seriesstart} to \ref{seriesend} covers all forms without local obstructions, Lemma \ref{anisotropic} directly follows.
\end{proof}

Using Lemma \ref{anisotropic}, we can conclude that Fourier coefficients of $\mathcal{E}$ is greater than $\ell^{1/2-\epsilon}$ for all sufficiently large $\ell$ that are supported (i.e. $\ell=\ell_n$ for some $n$, so that the coefficients are not trivially zero). Also, Duke's work \cite{Duke} implies that the $\ell$-th Fourier coefficients of $f$ do not exceed $\ell^{3/7+\epsilon}$ for large enough $\ell$. Hence, if $\mathcal{U}$ is identically zero, then the form $P_{a,b,c,m}$ is almost universal. 
With the lemma, we are ready to prove Theorem \ref{aucrit}:
\begin{proof}[Proof of Theorem \ref{aucrit}]
We know that $\mathcal{U}$ is a linear combination of unary theta functions, which are of this form:
\begin{align}\label{utfdef}
\vartheta_{h,t,N}(\tau):=\sum\limits_{r\equiv h\pmod{\frac{2N}{t}}} rq^\frac{tr^2}{2N},
\end{align}
where $h$ may be chosen modulo $2N/t$, and $t$ is a squarefree divisor of $2N$. 

By splitting $\vartheta_{h,t,N}(\tau)$ into two parts:
\begin{align*}
\vartheta_{h,t,N}(\tau) = \vartheta_{h,t,N}(\tau)^+ + \vartheta_{h,t,N}(\tau)^- :=\sum\limits_{\substack{r\equiv h\pmod{\frac{2N}{t}}\\r>0}} rq^\frac{tr^2}{2N} + \sum\limits_{\substack{r\equiv h\pmod{\frac{2N}{t}}\\r<0}} rq^\frac{tr^2}{2N,}
\end{align*}
we observe that the negative Fourier coefficients of $\vartheta_{h,t,N}(\tau)^-$ have a growth rate greater than $\ell^{1/2-\epsilon}$, which cannot be compensated by $f$. Also, we observe that the Fourier coefficients of  $\Theta(\tau)$ are non-negative, so the powers at which $\vartheta_{h,t,N}(\tau)^-$ has nonzero coefficients must coincide with the powers at which $\mathcal{E}$ has nonzero (positive) coefficients. Hence, such unary theta functions must satisfy
\begin{align*}
\ell_n=tr^2\equiv 0\pmod{t},
\end{align*}
which implies
\begin{align*}
\ell_n=8(m-2)n+(a+b+c)(m-4)^2\equiv 0\pmod{t}.
\end{align*}
As $t$ is squarefree, any square factors in $\ell_n$ must come from $r$, and can be divided through. So we can write $\ell_n=s\ell'_n$, where $s$ is the square part of $\ell_n$, and then we have $t|\ell'_n$. Now, as $t$ divides both $\ell'_n$ and $2N=4(m-2)$, and is squarefree, so $t$ satisfies:
\begin{align*}
t\big|\gcd\big(4(m-2),\ell'_n\big).
\end{align*}
Hence, we have a finite (and usually small) set $T$ of possible values of $t$. For each $t$, we check if there exists $n,r\in\N$ such that $tr^2=\ell_n$. In particular, we check where the following congruence equation
\begin{align*}
tr^2\equiv(a+b+c)(m-4)^2\pmod{8(m-2)}
\end{align*}
is solvable. If the congruence equation is not solvable for all possible values of $t$, we can conclude that $\mathcal{U}$ is zero, and hence the form $P_{a,b,c,m}$ is almost universal.
\end{proof}

\subsection{A worked example}\quad\label{Pformexample}

We consider the form $\mathbf{P}_m:=p_m(x)+p_m(y)+3p_m(z)$. The only prime factor among the weights is 3, and we have
\[
\Big(\cfrac{1}{3}\Big)\neq\Big(\cfrac{-1}{3}\Big).
\]
By Theorem \ref{localobsodd}, there is a local obstruction modulo $9$ for $m\not\equiv 2\pmod{3}$. Meanwhile, by Theorem \ref{localobs2}, we conclude that there are no local obstruction modulo $2^k$ for any value of $m$.

Hence, the form $\mathbf{P}_m$ has no local obstruction if and only if $m\equiv 2\pmod{3}$. In this case, we have
\[
\ell_n=8(m-2)n+5(m-4)^2.
\]
So, the value of $t$ is restricted by
\[
t\Big|\gcd\Big(4(m-2),\cfrac{8(m-2)n+5(m-4)^2}{s}\Big).
\]
By simple calculation, we have $T=\{1,5\}$ for $m\equiv 2\pmod{5}$ and $T=\{1\}$ otherwise.

For $t=1$, the relevant congruence equation is
\[
r^2\equiv 5(m-4)^2\pmod{8(m-2)}.
\]
If $4\nmid m$, then this is clearly not solvable, by consider modulo 8. Meanwhile, if $4|m$, it is solvable if and only if $(m-2)/2$ is a product of primes that are congruent 1 or 4 modulo 5 (so that $(5/p_i)=1$). However, as we has already assumed $m\equiv 2\pmod{3}$, this implies $(m-2)/2$ is divisible by 3, and hence it is not solvable either.

For $t=5$, the relevant congruence equation is
\[
5r^2\equiv 5(m-4)^2\pmod{8(m-2)},
\]
which is solvable, hence we cannot conclude anything for $m\equiv 2\pmod{5}$ with this method.

Thus, we conclude that for $m\equiv 2\pmod{3}$ and $m\not\equiv 2\pmod{5}$, the form $\mathbf{P}_m$ is almost universal.

\section{Missing Representations of the Spinor Genus}

In this section, we propose an algorithm which can be used to show that some specific forms $P_{a,b,c,m}$ are not almost universal. This section closely follows the approach introduced in Section 5 of \cite{HaenschKane}, and serves as its generalisation.

\subsection{A special case}

For an individual case, we can verify the conjecture \ref{sswconj}: 
\[
\Big(\sum\limits_{M+\mu\in\cls/\spn(L+\nu)}\omega_{M+\mu}^{-1}\Big)^{-1} \sum\limits_{M+\mu\in\cls/\spn(L+\nu)}\frac{\Theta_{M+\mu}}{\omega_{M+\mu}} = \mathcal{E}_{\spn(L+\nu)}+\mathcal{U}_{\spn(L+\nu)}
\]
and find out the unary theta function in the conjecture. If the Fourier coefficients of the unary theta functions exactly cancels that of the Eisenstein for infinitely many coefficients, then those Fourier coefficients of the weighted sum are zero. Since the Fourier coefficients of the theta functions are non-negative, we can conclude that those Fourier coefficients of our generating functions are zero, proving that it is not almost universal. Here we work on the form $\mathbf{P}_m$ as described in Section \ref{Pformexample}. Specifically, we prove Proposition \ref{notauintro}, in a more concrete form:
\begin{proposition}\label{notau}
The form $\mathbf{P}_{17}=p_{17}(x)+p_{17}(y)+3p_{17}(z)$ is not almost universal. In particular, $\mathbf{P}_{17}$ does not represent $n$ whenever
\begin{align*}
120n+845 = 5\ell^2,
\end{align*}
where $\ell$ is a prime that is congruent to 1 or $19\pmod{30}$.
\end{proposition}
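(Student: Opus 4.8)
\emph{Setup and reduction to Fourier coefficients.} The plan is to realize the generating function $\Theta$ of Section~4 as the theta series $\Theta_{L+\nu}$ of an explicit lattice coset and then combine the conjectural spinor-genus Siegel--Weil formula \eqref{ssweqn} with the non-negativity of theta coefficients. Take $L=30\Z^3$ with quadratic form $Q(x,y,z)=x^2+y^2+3z^2$ and coset vector $\nu=(17,17,17)$; with the normalization of Section~4 the coefficient $r(\ell_n)$ of $q^{\ell_n/60}$ in $\Theta_{L+\nu}$ is exactly the number of representations $n=\mathbf{P}_{17}(x,y,z)$, where $\ell_n=120n+845$. Since $m=17\equiv 2\pmod 5$, the analysis of Section~\ref{Pformexample} gives $T=\{1,5\}$, so the only unary theta functions that can occur in $\mathcal{U}_{\spn(L+\nu)}$ are those with $t=5$, i.e.\ the functions $\vartheta_{h,5,30}(\tau)=\sum_{r\equiv h\pmod{12}} r\,q^{r^2/12}$ of \eqref{utfdef}; such a function contributes to the coefficient of $q^{\ell_n/60}$ precisely when $\ell_n=5r^2$, that is at the indices $120n+845=5\ell^2$ appearing in the statement.

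\emph{Verifying the spinor Siegel--Weil identity.} Next I would verify Conjecture~\ref{sswconj} for this coset. This entails computing the genus and the spinor-genus decomposition of $L+\nu$, the automorph weights $\omega_{M+\mu}$, the Eisenstein series $\mathcal{E}_{\spn(L+\nu)}$ (whose coefficients are products of local densities, accessible through \eqref{ShimuraSW}), and the precise linear combination $\mathcal{U}_{\spn(L+\nu)}$ of the $\vartheta_{h,5,30}$. As the relevant space of weight-$3/2$ modular forms is finite-dimensional, identity \eqref{ssweqn} can be confirmed by matching Fourier coefficients up to a Sturm-type bound. I note that $3$ is anisotropic for $Q$---the computation \eqref{Hilbert} gives $S_3Q=1\neq(-1,-dQ)_3$---but $\ell_n\equiv 2\pmod 3$ for every $n$, so the divisibility at $3$ is bounded and the Eisenstein coefficients stay under control there, while $5$ is isotropic.

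\emph{Computing the cancellation.} With the formula established I would compute the coefficient of $q^{5\ell^2/60}$ in $\mathcal{E}_{\spn(L+\nu)}+\mathcal{U}_{\spn(L+\nu)}$ for $\ell$ prime. The Eisenstein coefficient at $5\ell^2$ factors as a finite product of local densities; for prime $\ell$ the factor at $\ell$ is a clean two-term expression reflecting the divisors $1,\ell$, while the factors at $3$ and $5$ reduce to the Legendre symbols $\bigl(\tfrac{\cdot}{3}\bigr)$ and $\bigl(\tfrac{\cdot}{5}\bigr)$. On the other hand each $\vartheta_{h,5,30}$ contributes $\pm\ell$ to the coefficient at $5\ell^2$, with sign dictated by $\ell\bmod 12$. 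I expect these to cancel exactly under the stated hypotheses; writing the condition as $\ell\equiv 1,19\pmod{30}$ is equivalent (for $\ell$ coprime to $30$) to $\ell\equiv 1\pmod 3$ together with $\bigl(\tfrac{\ell}{5}\bigr)=1$, which are precisely the constraints that force the Legendre symbols and the unary-theta sign into the configuration producing a vanishing sum.

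\emph{Conclusion and main obstacle.} Finally, every $\Theta_{M+\mu}$ with $M+\mu\in\spn(L+\nu)$ has non-negative Fourier coefficients and their weighted average is $\mathcal{E}_{\spn(L+\nu)}+\mathcal{U}_{\spn(L+\nu)}$, so a vanishing coefficient of the latter forces the corresponding coefficient of $\Theta_{L+\nu}$ to vanish. Hence $\mathbf{P}_{17}$ omits every $n$ with $120n+845=5\ell^2$ for such $\ell$, and Dirichlet's theorem supplies infinitely many primes $\ell\equiv 1,19\pmod{30}$, so $\mathbf{P}_{17}$ is not almost universal. I expect the main obstacle to be the second and third steps together: verifying the spinor Siegel--Weil identity for this coset and, above all, determining the exact combination $\mathcal{U}$ and matching its $\pm\ell$ contribution against the local-density formula, since the bookkeeping of the symbols at $3$, $5$ and $\ell$ and of the sign from $\ell\bmod 12$ is exactly what distills into the clean congruence $\ell\equiv 1,19\pmod{30}$.
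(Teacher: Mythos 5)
Your plan follows essentially the same route as the paper's proof: realize the generating function as the theta series of the shifted lattice $L+\nu$, verify the spinor--genus Siegel--Weil identity \eqref{ssweqn} for this coset by matching Fourier coefficients up to a Sturm-type bound (the paper does exactly this in Proposition \ref{ssw113prop}, finding $\Theta_{\spn(L+\nu)}=\mathcal{E}_{\gen(L+\nu)}-\vartheta'/36$ with $\vartheta'(\tau)=\sum_{n\equiv 1\,(6)}nq^{5n^2}$), locate the possible unary support at $\ell_n=5\ell^2$, and conclude from the non-negativity of the coefficients of each $\Theta_{M+\mu}$ in the spinor genus. The one point where you genuinely diverge --- and, not coincidentally, the step you leave at the level of ``I expect these to cancel'' --- is the evaluation of the Eisenstein coefficient at $5\ell^2$. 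You propose a direct local-density computation; the paper's Lemma \ref{Eisencoeff} instead applies a sieve operator to $\hat{\Theta}=\Theta^2(\tau)\Theta(3\tau)$ to identify $576\,\mathcal{E}_{5\ell^2}$ with the full representation number of $5\ell^2$ by $x^2+y^2+3z^2$ (the imprimitive term $\hat{\Theta}(25\tau)|S_{120,5}$ vanishes since $25\nmid 5\ell^2$), which equals $16H(15\ell^2)$ by Jones's theorem and then $16\ell$ by the Hurwitz class number formula when $\bigl(\tfrac{-15}{\ell}\bigr)=1$. Both methods should succeed, but the class-number route delivers the exact value $\ell/36$ needed to cancel $\vartheta'/36$ with no local-density bookkeeping, which is precisely the cancellation your outline still has to pin down. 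Your reading of the congruence $\ell\equiv 1,19\pmod{30}$ as the conjunction of $\ell\equiv 1\pmod 6$ (forcing the unary coefficient to be $+\ell$) with $\bigl(\tfrac{\ell}{5}\bigr)=1$ is consistent with how these conditions arise in the paper, and your aside that $3$ is anisotropic while $\ell_n\equiv 2\pmod 3$ is correct.
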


An alternative way to write the quadratic form (\ref{qform}) is to make use of a shifted lattice. By substituting $(a,b,c,m)=(1,1,3,17)$, we obtain:
\begin{align*}
L&:=\lan 30^2, 30^2, 3\cdot 30^2 \ran, & \nu&:=-\frac{13}{30}(e_1+e_2+e_3),
\end{align*}
where $\{e_1,e_2,e_3\}$ is the standard basis. For simplicity, we shall write the cosets by their coordinates with respect to this standard basis. Then, we can rewrite equation (\ref{qform}) in terms of a quadratic form
\begin{align}\label{latticeqform}
120n+845=Q_L(\omega+\nu),
\end{align}
where $Q_L \lan 30^2, 30^2, 3\cdot 30^2 \ran$, $\omega\in\Z^3$.
We now proof a small lemma about the genus of $L+\nu$:
\begin{lemma}
Every lattice coset in the genus of $L+\nu$ can be written in the form $L+\mu$ for some vector $\mu$ such that $30\mu\in L$.
\end{lemma}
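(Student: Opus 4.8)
The plan is to argue locally, exploiting the single algebraic fact that $30\nu=-13(e_1+e_2+e_3)\in L$ together with the description of coset genus recalled in the excerpt: $M+\mu\in\gen(L+\nu)$ if and only if for every prime $p$ there is $X_p\in O(V_p)$ with $(M+\mu)_p=X_p(L+\nu)_p$. The two things to establish are that the lattice part of any coset in the genus may be taken to be $L$ itself, and that the shift vector then has denominator dividing $30$.

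First I would reduce the lattice part to $L$. Taking lattice parts in the local equivalences above shows that if $M+\mu\in\gen(L+\nu)$, then $M\in\gen(L)$. Since $L=\lan 30^2,30^2,3\cdot 30^2\ran=30^2\lan 1,1,3\ran$ is a scaling of $\lan 1,1,3\ran$, and scaling the quadratic form leaves the isometry group, and hence the whole class/genus structure, unchanged, the class number of $L$ equals that of $\lan 1,1,3\ran$. A finite reduction-theory computation (or the classical classification of positive-definite ternary forms of small discriminant) shows that this genus consists of a single class, so there is $\sigma\in O(V)$ with $\sigma M=L$. Replacing $M+\mu$ by $\sigma(M+\mu)=L+\sigma\mu$, which lies in the same $O(V)$-orbit and therefore the same genus, I may assume from now on that the lattice part is exactly $L$.

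Next I would bound the denominator of $\mu$ for a coset $L+\mu\in\gen(L+\nu)$. For each prime $p$ choose $X_p\in O(V_p)$ with $X_p\big((L+\nu)_p\big)=(L+\mu)_p$. Comparing lattice parts gives $X_pL_p=L_p$, so $X_p\in O(L_p)$, and comparing coset representatives gives $\mu\equiv X_p\nu\pmod{L_p}$. Multiplying by $30$,
\begin{align*}
30\mu\equiv X_p(30\nu)\pmod{L_p}.
\end{align*}
Because $30\nu=-13(e_1+e_2+e_3)\in L\subseteq L_p$ and $X_pL_p=L_p$, the right-hand side lies in $L_p$, whence $30\mu\in L_p$. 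For $p\nmid 30$ this is automatic, since $\nu\in L_p$ already forces $(L+\nu)_p=L_p$ and hence $\mu\in L_p$. As $30\mu\in L_p$ for every prime $p$ and $L=\bigcap_p\big(V\cap L_p\big)$, I conclude $30\mu\in L$, which is exactly the asserted bound on $\mu$.

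The main obstacle is the class-number-one verification for $L$ in the first step; the remainder is a clean local computation driven entirely by the membership $30\nu\in L$. Establishing that $\lan 1,1,3\ran$ forms a one-class genus is only a finite check, but it is the one place where genuine arithmetic input about the lattice (rather than the formal coset machinery) is required. Should that genus in fact contain several classes, one would instead have to verify directly that every lattice occurring as a lattice part of a coset in $\gen(L+\nu)$ is isometric to $L$, and the argument of the second step would carry over unchanged once that reduction is in place.
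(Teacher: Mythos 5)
Your proposal is correct and follows essentially the same route as the paper: both reduce locally to $M\in\gen(L)$, invoke that $L$ is alone in its genus to replace $M$ by $L$, and then bound the denominator of the shift vector prime by prime using $30\nu\in L$. Your derivation of $30\mu\in L_p$ via $30\mu\equiv X_p(30\nu)\pmod{L_p}$ is a slightly cleaner packaging of the paper's computation of $X_p(M_p+p\mu)$ at $p=2,3,5$, but the underlying argument is identical.
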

\begin{proof}
By definition, if $M+\mu \in \gen(L+\nu)$, then for every $p$ there exists $X_p\in O(V_p)$ such that $X_p(M+\mu)_p = (L+\nu)_p$. 

For $p\neq 2,3,5$, we have
\begin{align*}
M_p+\mu = (M+\mu)_p \cong (L+\nu)_p = L_p.
\end{align*}
This implies $M_p+\mu$ is actually a lattice, that is, $\mu\in M_p$. Hence $X_p(M_p)=L_p$.

Meanwhile, for $p=2,3$ or $5$, since $X_p(M+\mu)_p = (L+\nu)_p$, we know that $X_p(\mu)=\alpha+\nu$ for some $\alpha\in L_p$. Note that $p\nu\in L_p$, and hence $L_p+p\nu=L_p$. Then, we have
\begin{align*}
X_p(M_p+p\mu) = X_p(M_p + \mu) + (p-1)X_p(\mu) = (L_p+\nu)+ (p-1)(\alpha+\nu) = L_p + p\nu = L_p.
\end{align*}
This gives $p\mu\in M_p$, and hence $X_p(M_p+p\mu)=X_p(M_p)=L_p$.

As $L$ is the only class in its genus, $X_p(M_p)=L_p$ for all $p$ implies $M \cong L$. Also, as $M_p+30\mu = M_p$ for every $p$, we have $30\mu\in M\cong L$. Hence the result.
\end{proof}

Via direct computation, we obtain a list of the possible values of $\mu$. There are in total 26 classes in the genus of $L+\nu$. To proceed, we rely on the following result:
\begin{lemma}
The genus of $L+\nu$ splits into two spinor genera, each containing 13 classes.
\end{lemma}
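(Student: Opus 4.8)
The plan is to compute the number of spinor genera in $\gen(L+\nu)$ through the local spinor norm groups of the coset stabilisers, and then to deduce the even split of the $26$ classes from the general fact that all spinor genera in a genus carry the same number of classes.

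First I would use the idelic description of spinor genera (see \cite{Omeara} for lattices and \cite{ShimuraLattCo, Xu} for cosets): the number of spinor genera in $\gen(L+\nu)$ is the order of the finite abelian $2$-group
\[
\mathbb{A}^\times \big/ \mathbb{Q}^\times \prod_v \theta\!\left(O(V_v)_{(L+\nu)_v}\right),
\]
where $v$ ranges over all places of $\mathbb{Q}$, $O(V_v)_{(L+\nu)_v}$ is the stabiliser of the local coset $(L+\nu)_v$ in $O(V_v)$, and $\theta$ is the spinor norm. Since $V$ is positive definite, the archimedean factor is $\theta(O(V_\infty))=\mathbb{R}_{>0}$, which is exactly what permits a nontrivial splitting. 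Because $\det L = 3\cdot 30^4$ involves only the primes $2,3,5$, at every $p\notin\{2,3,5\}$ the localisation $L_p$ is unimodular, $\nu\in L_p$, and $V_p$ is isotropic; there $\theta(O(V_p)_{L_p})=\mathbb{Q}_p^\times$ and the prime contributes its full local factor. Hence only $\infty,2,3,5$ can affect the index.

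Next I would compute $\theta\!\left(O(V_p)_{(L+\nu)_p}\right)$ for $p\in\{2,3,5\}$ explicitly. At each such prime I would put $L_p$ into Jordan form, record the position of $\nu_p$ modulo $L_p$ — note $30\nu\in L$ but $\nu\notin L_p$, so the coset is \emph{genuinely} nontrivial and cannot be replaced by a lattice — and then determine which local isometries fix the coset, reading their spinor norms from the usual generating reflections. Assembling these local groups, I would verify that $\mathbb{Q}^\times\,\mathbb{R}_{>0}\prod_{p}\theta(\cdots)$ has index exactly $2$ in $\mathbb{A}^\times$; equivalently there is a single nontrivial spinor character on the genus, so $\gen(L+\nu)$ is the union of precisely two spinor genera.

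Finally, to obtain $13$ classes in each, I would invoke the standard fact that the spinor genera within one genus all contain the same number of classes (the group displayed above acts transitively on the spinor genera and compatibly on the classes lying in each). Since the genus has $26$ classes and splits into two spinor genera, each spinor genus contains $26/2=13$ classes. The main obstacle is the $2$-adic computation, and to a lesser degree the coset bookkeeping at $3$ and $5$: because $\nu_p$ has a true denominator at these primes, I must identify the stabiliser of the coset and its spinor norm image rather than that of $L_p$, and the outcome — that the global index is exactly $2$ and not $1$ or $4$ — rests on pairing these local spinor norm groups correctly against the archimedean factor $\mathbb{R}_{>0}$.
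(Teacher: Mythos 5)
Your route is correct in outline but genuinely different from the paper's. The paper works bottom-up: it already has the explicit list of $26$ cosets $L+\mu$ with $30\mu\in L$ from the preceding lemma, and for each one it exhibits an adelic isometry $X=(X_2,X_3,X_5,\id,\dots)$ carrying $L+\nu$ to $L+\mu$, computes its spinor norm, and sorts the classes into two piles by hand (giving one worked example, $\mu=\tfrac{1}{30}(7e_1+7e_2+13e_3)$, and omitting the rest). You instead go top-down: compute the idelic index $[\mathbb{A}^\times:\mathbb{Q}^\times\prod_v\theta(O(V_v)_{(L+\nu)_v})]=2$ to get the number of spinor genera, then invoke equidistribution of classes among the spinor genera of a genus to conclude $26/2=13$. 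Your approach is more conceptual and avoids the class-by-class bookkeeping, and your local analysis away from $2,3,5$ is right (unimodular isotropic ternary $L_p$ splits a hyperbolic plane, so $\theta(O^+(L_p))=\mathbb{Q}_p^\times$). Two caveats. First, the decisive content --- the spinor norm groups of the coset stabilisers at $2,3,5$ and the verification that the global index is exactly $2$ --- is announced but not carried out; to be fair, the paper's own proof is equally incomplete on its side of the ledger, so this is a comparable level of rigour rather than a disqualifying gap. Second, the equidistribution fact you cite is standard for proper classes and proper spinor genera of lattices; here you need its extension to lattice cosets (available via the double-coset description in \cite{Xu}), and you should check it is compatible with the paper's convention of using $O(V)$ rather than $SO(V)$ and identifying $L+\nu$ with $L-\nu$, since an uneven distribution of classes fixed under that identification could in principle break the exact $13$--$13$ split. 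The paper's explicit sorting sidesteps this issue entirely, which is what its approach buys at the cost of $26$ separate computations.
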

\begin{proof}
This result can be shown by direct computation of the spinor norm of the element in $O(V)O'_{\mathbb{A}}(V)$ that maps $L+\nu$ to $L+\mu$. In particular, we find a map $X = (X_2, X_3, \cdots, X_p, \cdots)\in O(V)O'_{\mathbb{A}}(V)$ such that $X_p(L_p+\nu) = L_p+\mu$ for all $p$. 

For primes other than $2,3,5$, we have 
\begin{align*}
L_p+\mu = L_p = L_p+\nu.
\end{align*}
So it suffices to find $X_2, X_3, X_5$. For example, for $\mu = \frac{1}{30}(7e_1+7e_2+13e_3)$, the map taking $L+\nu$ to $L+\mu$ is given by $X = (X_2, X_3, \cdots, X_p, \cdots) = (\id, \id, \tau_{e_1}\circ\tau_{e_2}, \cdots, \id, \cdots)$. As $Q(e_1)/\mathbb{Q}^2 = Q(e_2)/\mathbb{Q}^2 = 1$, it is clear that $X\in O'_{\mathbb{A}}(V)\subseteq O(V)O'_{\mathbb{A}}(V)$. The other classes are then classified similarly into the two spinor genera, and the explicit computation is omitted here.
\end{proof}
With these lemmas, we can establish the following result:
\begin{proposition}\label{ssw113prop}
We have
\begin{align}\label{ssw113}
\Theta_{\spn(L+\nu)}(\tau)=\mathcal{E}_{\gen(L+\nu)}(\tau)-\frac{\vartheta'(\tau)}{36},
\end{align}
where $\mathcal{E}_{\gen(L+\nu)}$ is the Eisenstein series obtained by Shimura's generalised Siegel-Weil formula (equation (\ref{ShimuraSW})), and
\[
\vartheta'(\tau)=\sum\limits_{n\equiv 1\pmod{6}} nq^{5n^2}
\]
is a unary theta function.
\end{proposition}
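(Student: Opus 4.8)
The plan is to establish \eqref{ssw113} as an identity in the finite-dimensional space $M_{3/2}(\Gamma_1(2wN))$ (with $w=\lcm(1,1,3)=3$ and $N=30$) and to reduce it to a comparison of finitely many Fourier coefficients. Both sides are modular forms of weight $3/2$ on $\Gamma_1(2wN)$ with the same multiplier system: the average $\Theta_{\spn(L+\nu)}$ is the normalised weighted sum of the theta series $\Theta_{M+\mu}$ attached to the $13$ classes making up the spinor genus of $L+\nu$, each with the modularity established in Section~4; $\mathcal{E}_{\gen(L+\nu)}$ has the same modularity by Shimura's generalised Siegel--Weil formula \eqref{ShimuraSW}; and $\vartheta'$ is a unary theta function of weight $3/2$ (cf.\ \eqref{utfdef}). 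It therefore suffices to show that the difference $D:=\Theta_{\spn(L+\nu)}-\mathcal{E}_{\gen(L+\nu)}+\tfrac{1}{36}\vartheta'$ vanishes identically.

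Conceptually, the cancellation occurs because the Eisenstein (singular) parts of the two averages coincide. By Siegel's theorem the Fourier coefficients of $\mathcal{E}_{\gen(L+\nu)}$ are products of local representation densities, a genus invariant; the genus average $\Theta_{\gen(L+\nu)}$ reproduces exactly these coefficients, and the spinor average $\Theta_{\spn(L+\nu)}$ shares the same Eisenstein part, so that $\Theta_{\spn(L+\nu)}-\mathcal{E}_{\gen(L+\nu)}$ is a cusp form—precisely the cuspidality predicted by Conjecture~\ref{sswconj}, which the present computation serves to verify. This reduces the content of \eqref{ssw113} to identifying that cusp form as the single unary theta function $-\vartheta'/36$.

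Next I would compute the two ingredients explicitly. Using the preceding two lemmas, the spinor genus of $L+\nu$ is the union of $13$ cosets $L+\mu$ with $30\mu\in L$ whose representatives were determined above; summing $\Theta_{L+\mu}(\tau)=\sum_{\omega\in\Z^3}q^{Q_L(\omega+\mu)}$ weighted by $\omega_{L+\mu}^{-1}$ and normalising produces the $q$-expansion of $\Theta_{\spn(L+\nu)}$. The coefficients of $\mathcal{E}_{\gen(L+\nu)}$ I would read off from the Siegel--Weil formula as products of local densities, the only delicate factors being those at the bad primes $p\in\{2,3,5\}$ dividing $2wN=180$, the densities at all other primes being the standard ones for $\langle 1,1,3\rangle$. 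Subtracting gives the $q$-expansion of $D$.

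Finally I would invoke an effective Sturm-type bound for $M_{3/2}(\Gamma_1(2wN))$: a modular form of this weight and level whose Fourier coefficients all vanish up to the bound is identically zero. Checking that the computed coefficients of $D$ vanish up to this bound then proves \eqref{ssw113}, and simultaneously confirms that $\mathcal{U}_{\spn(L+\nu)}=-\vartheta'/36$ is a single unary theta function as Conjecture~\ref{sswconj} predicts (the constant $-1/36$ emerging from the coefficient comparison). The main obstacle is the accurate evaluation of the local densities for $\mathcal{E}_{\gen(L+\nu)}$ at $p=2,3,5$ together with the determination of the $13$ automorph numbers $\omega_{L+\mu}$ entering $\Theta_{\spn(L+\nu)}$; once these are in hand, the verification is a finite—if lengthy—coefficient comparison.
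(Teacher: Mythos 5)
Your approach is essentially the paper's: both reduce \eqref{ssw113} to a valence/Sturm-type bound for the relevant space of weight-$3/2$ forms and then verify finitely many Fourier coefficients by explicit computation. One caution: the paper takes the forms (with integral exponents, as written in \eqref{ssw113}) to live on $\Gamma_1(4\cdot 3\cdot 30^2)$, giving a bound of $9331200$ coefficients, whereas your stated level $\Gamma_1(2wN)=\Gamma_1(180)$ would give a far smaller bound and is not justified for this normalisation, so you must make sure the level (and hence the number of coefficients checked) is the correct, larger one.
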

\begin{proof}
The valence formula states that a weight $k$ modular form on a congruence subgroup $\Gamma$ can be uniquely determined by the first
\begin{align*}
\frac{k}{12}[\SL_2(\Z):\Gamma]
\end{align*}
coefficients of its Fourier expansion, where $[\SL_2(\Z):\Gamma]$ denotes the index of $\Gamma$ in $\SL_2(\Z)$. For our case, the forms in equation (\ref{ssw113}) are all modular on the congruence subgroup $\Gamma_1(4\cdot 3\cdot 30^2)$; and the index $[\SL_2(\Z):\Gamma_1(N)]$ is given by the following formula \cite[p.~2]{Ono}:
\begin{align}
[\SL_2(\Z):\Gamma_1(N)] = N^2\prod_{p|N}(1-p^{-2}).
\end{align}
With this, we can find the number of terms we need, by a simple substitution:
\[
\frac{1}{8}(10800)\Big(\frac{3}{4}\Big)\Big(\frac{8}{9}\Big)\Big(\frac{24}{25}\Big)=9331200.
\]

So we can confirm that the two sides are indeed equal, by checking the first 9331200 terms of the Fourier expansions, which can be easily done with a computer.
\end{proof}

The Fourier coefficients of the unary theta function in equation (\ref{ssw113}) is explicit, while that of the Eisenstein series is less so. In order to show that their coefficients indeed cancel out each other for infinitely many terms, we need the following lemma:
\begin{lemma}\label{Eisencoeff}
Let $\ell$ be a prime that is congruent to either 1 or 19 modulo 30. Then, the $5\ell^2$-th coefficient of $\mathcal{E}_{\gen(L+\nu)}$ is $\ell/36$.
\end{lemma}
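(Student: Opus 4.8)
The plan is to compute the $5\ell^2$-th Fourier coefficient of $\mathcal{E}_{\gen(L+\nu)}$, call it $a_{\mathcal{E}}(N)$ for $N=5\ell^2$, directly from its arithmetic description and to isolate \emph{all} of its $\ell$-dependence by means of a Hecke operator. By Shimura's generalized Siegel--Weil formula \eqref{ShimuraSW}, the coefficient $a_{\mathcal{E}}(N)$ is the weighted genus average $\left(\sum_{M+\mu}\omega_{M+\mu}^{-1}\right)^{-1}\sum_{M+\mu}\omega_{M+\mu}^{-1}r_{M+\mu}(N)$, which factors as an archimedean density (proportional to $\sqrt{N}$) times a product of local densities $\prod_p\beta_p(N)$. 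Because $\ell$ is a prime coprime to $30$ -- the only primes dividing the level $4\cdot 3\cdot 30^2$ and the determinant are $2,3,5$ -- the lattice coset is unimodular and unshifted at $\ell$, so every piece of ramified local data is independent of $\ell$ and the entire $\ell$-dependence sits in the archimedean factor and in $\beta_\ell$.

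First I would invoke the Hecke operator $T_{\ell^2}$ on the space of weight-$3/2$ forms on $\Gamma_1(4\cdot 3\cdot 30^2)$. Under the Shimura correspondence this space maps to weight $2$, where the relevant Eisenstein series is a $T_\ell$-eigenform with eigenvalue $1+\ell$; hence $\mathcal{E}_{\gen(L+\nu)}$ is a $T_{\ell^2}$-eigenform with the same eigenvalue. Writing out the half-integral weight Hecke relation at $n=5$ (so $k=1$) and using $a_{\mathcal{E}}(5/\ell^2)=0$ gives
\begin{align*}
a_{\mathcal{E}}(5\ell^2) = \left(1 + \ell - \left(\frac{-15}{\ell}\right)\right) a_{\mathcal{E}}(5),
\end{align*}
where the quadratic symbol is the genus character attached to the square class of the represented value, namely $-\det(Q')\cdot 5 = -3\cdot 5 = -15$, i.e.\ the character of $\Q(\sqrt{-15})$.

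Next I would evaluate $\left(\frac{-15}{\ell}\right)$ by quadratic reciprocity: $\left(\frac{-15}{\ell}\right)=\left(\frac{-3}{\ell}\right)\left(\frac{5}{\ell}\right)=\left(\frac{\ell}{3}\right)\left(\frac{\ell}{5}\right)$. For $\ell\equiv 1,19\pmod{30}$ one has $\ell\equiv 1\pmod 3$ and $\ell\equiv\pm1\pmod 5$, so both factors equal $1$ and $\left(\frac{-15}{\ell}\right)=1$. (These are precisely the classes mod $30$ for which this holds together with $\ell\equiv 1\pmod 6$, the latter being what forces the $q^{5\ell^2}$-coefficient of $\vartheta'$ to be $+\ell$ rather than $-\ell$.) Consequently $a_{\mathcal{E}}(5\ell^2)=\ell\,a_{\mathcal{E}}(5)$, and it remains to show $a_{\mathcal{E}}(5)=\tfrac1{36}$. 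This is a single genus computation: evaluate the weighted genus average $r_{\gen(L+\nu)}(5)$ over the $26$ coset classes, equivalently the product of the ramified local densities $\beta_2\beta_3\beta_5$ at $N=5$ together with the archimedean factor, which is a finite check.

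The main obstacle is pinning down the normalizations exactly: I must justify that the Nebentypus character and the $T_{\ell^2}$-eigenvalue combine to give precisely the symbol $\left(\frac{-15}{\ell}\right)$ -- rather than, say, $\left(\frac{-5}{\ell}\right)$ or $\left(\frac{5}{\ell}\right)$, which would \emph{not} be constant across the two residue classes and would break the clean answer -- and that the constant $a_{\mathcal{E}}(5)$ comes out as $\tfrac1{36}$ and not some other rational multiple; this demands careful bookkeeping of the local densities at $2,3,5$ for the heavily non-unimodular shifted lattice $\langle 30^2,30^2,3\cdot 30^2\rangle+\nu$. As a cross-check, once Proposition \ref{ssw113prop} is in hand one can read $a_{\mathcal{E}}(5)$ off the explicit low-order coefficients, since the $q^5$-coefficient of $\vartheta'$ equals $1$; alternatively, proving the local identity $\beta_\ell(5\ell^2)=\beta_\ell(5)$ (which holds exactly when $\left(\frac{-15}{\ell}\right)=1$, using that $\langle 1,1,3\rangle$ is isotropic over $\Q_\ell$) yields the same reduction $a_{\mathcal{E}}(5\ell^2)=\ell\,a_{\mathcal{E}}(5)$ purely through local density computations, sidestepping the half-integral Hecke formalism should its normalization prove delicate.
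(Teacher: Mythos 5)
Your route is genuinely different from the paper's. The paper never touches Hecke operators: it introduces a sieve operator $S_{120,5}$, writes $\hat{\Theta}|S_{120,5}=576\,\mathcal{E}_{\gen(L+\nu)}+\hat{\Theta}(25\tau)|S_{120,5}$ for $\hat{\Theta}=\Theta^2(\tau)\Theta(3\tau)$ (so that the coset genus average is identified, up to the constant $576$ and the removal of the representations with $5\mid x,y,z$, with the plain genus theta series of $x^2+y^2+3z^2$), and then quotes Jones' Theorem 86, $r_{\gen}(5\ell^2)=16H(15\ell^2)$, together with the class number formula $H(15\ell^2)=1+\ell-\left(\frac{-15}{\ell}\right)$. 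Your $T_{\ell^2}$ argument reproduces exactly the same factor $1+\ell-\left(\frac{-15}{\ell}\right)$ (unsurprisingly: the Hurwitz relation $H(15\ell^2)=\bigl(1+\ell-\left(\frac{-15}{\ell}\right)\bigr)H(15)$ \emph{is} the weight-$3/2$ Hecke eigenrelation), and your evaluation of $\left(\frac{-15}{\ell}\right)$ for $\ell\equiv 1,19\pmod{30}$ agrees with the paper. What your approach buys is independence from the special identity $16H(15\ell^2)$ for this particular genus, so it would generalize to weights $a,b,c$ where no such classical formula is available; what the paper's approach buys is that every constant is pinned down by elementary counting.

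That said, two steps you lean on are not yet proofs. First, that $\mathcal{E}_{\gen(L+\nu)}$ is a $T_{\ell^2}$-eigenform with eigenvalue $1+\ell$ and nebentypus giving the middle term $\left(\frac{-15}{\ell}\right)$ does not follow merely from its being an Eisenstein series: the Eisenstein subspace of weight $3/2$ on $\Gamma_1(10800)$ is spanned by eigenforms with eigenvalues $\chi_1(\ell)+\ell\chi_2(\ell)$ for various character pairs, so you must actually prove the eigenrelation (e.g.\ via the multiplicativity of the unramified local densities $\beta_\ell$, which is the ``alternative'' you sketch at the end and is the honest way to close this). For $\ell\equiv 1,19\pmod{30}$ the ambiguity $(\tfrac{3}{\ell})$ in the eigenvalue normalization happens to be $1$, so no error results, but this should be said. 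Second, the base value $a_{\mathcal{E}}(5)=\tfrac{1}{36}$ still has to be computed; your proposed shortcut of reading it off Proposition \ref{ssw113prop} is not immediate, since the $q^5$-coefficient of $\Theta_{\spn(L+\nu)}$ is a weighted count over all $13$ cosets in the spinor genus (some of which, unlike $L+\nu$ itself, could represent $5$), so one must still do the finite representation count; the paper's identity gives it at once as $16H(15)/576=16/576=1/36$.
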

\begin{proof}
This proof adopts the approach introduced in \cite{HaenschKane}. We firstly introduce the sieve operator $S_{N,\ell}$, which acts on Fourier series of the form
\[
f(\tau)=\sum_{n\geq 0} a_nq^n
\]
by the transformation
\[
f|S_{N,\ell}(\tau)=\sum_{\substack{n\geq 0\\ n\equiv \ell\pmod{N}}} a_nq^n.
\]
Now consider the number of representations of $120n+5$ by the quadratic form 
\begin{align}\label{modulorep}
Q(x,y,z) := x^2+y^2+3z^2.
\end{align}
We can classify these representations into two categories:
\begin{enumerate}[label=(\alph*)]
\item \label{oui5} $5|x, 5|y, 5|z$; and
\item \label{non5} everything else.
\end{enumerate}
By equation (\ref{ShimuraSW}), the coefficients of $\mathcal{E}_{\gen(L+\nu)}$ is the sum of the coefficients of shifted theta series in the genus of $L+\nu$, which counts the number of category \ref{non5} representations of the quadratic form (\ref{modulorep}), up to multiplication of a scalar. 

Here we define $\hat{\Theta}:=\Theta^2(\tau)\Theta(3\tau)$, where $\Theta(\tau):=\sum_{n\in\Z}q^{n^2}$ is the Jacobi theta function. Referring back to Shimura's generalised Siegel-Weil formula (equation (\ref{ShimuraSW})), we easily obtain the following equation:
\begin{align}\label{sieveform}
\hat{\Theta}(\tau)|S_{120,5} = 576\mathcal{E}_{\gen(L+\nu)} + \hat{\Theta}(25\tau)|S_{120,5}.
\end{align}
In the equation above, the term $576\mathcal{E}_{\gen(L+\nu)}$ counts the number of category \ref{non5} representations, while the term $\hat{\Theta}(25\tau)|S_{120,5}$ counts the number of category \ref{oui5} representations. Clearly, if $k\in\N$ is not divisible by 25, the $k$-th coefficient of $\hat{\Theta}(25\tau)|S_{120,5}$ is zero. In particular, the $5\ell^2$-th coefficient of $\hat{\Theta}(25\tau)|S_{120,5}$ is zero. Hence, the $5\ell^2$-th coefficient of $\mathcal{E}_{\gen(L+\nu)}$ is concisely determined by the number of representations of
\begin{align}\label{5ellellrep}
x^2+y^2+3z^2=5\ell^2.
\end{align}
By \cite[Theorem 86]{Jones}, the number of representations of the quadratic form (\ref{5ellellrep}) is given by $16H(15\ell^2)$, where $H$ is the Hurwitz class number. By the class number formula \cite[Corollary 7.28]{Cox}, we have
\[
H(15\ell^2)=\cfrac{h(-15)}{2}+\cfrac{h(-15\ell^2)}{2}=1+\ell-\Big(\cfrac{-15}{\ell}\Big),
\]
where $(-15/\ell)$ is the Kronecker symbol. In particular, for $\ell\equiv 1,19\pmod{30}$, we have $(-15/\ell)=1$, and hence we have $H(15\ell^2)=\ell$.

Here we write $\mathcal{E}_{5\ell^2}$ as the $5\ell^2$-th coefficient of $\mathcal{E}_{\gen(L+\nu)}$. Then, by equation (\ref{sieveform}), we have
\[
16\ell = 16H(15\ell^2)=576\mathcal{E}_{5\ell^2},
\]
which gives
\[
\mathcal{E}_{5\ell^2}=\frac{16\ell}{576}=\frac{\ell}{36}.
\]
\end{proof}
With Lemma \ref{Eisencoeff}, we are finally able to prove Proposition \ref{notau}:
\begin{proof}[Proof of Proposition \ref{notau}]
Consider the $5\ell^2$-th coefficients of equation (\ref{ssw113}). 

Since $\ell\equiv 1\pmod{6}$, the $5\ell^2$-th coefficient of $\vartheta'(\tau)$ is $\ell$; meanwhile, Lemma \ref{Eisencoeff}, states that the $5\ell^2$-th coefficient of $\mathcal{E}_{\gen(L+\nu)}$ is $\ell/36$. Therefore, the $5\ell^2$-th coefficient of $\Theta_{\spn(L+\nu)}$ is
\[
\frac{\ell}{36}-\frac{\ell}{36}=0.
\]
As $\Theta_{\spn(L+\nu)}$ is a weighted sum of some theta series $\Theta_{L+\mu}$, whose coefficients are numbers of representations, which are non-negative, we can conclude that the $5\ell^2$-th coefficient of every term in this sum is zero. In particular, the $5\ell^2$-th term of $\Theta_{L+\nu}$ is zero, which implies that there exists no $\omega\in\Z^3$ that satisfies
\begin{align*}
Q(\omega+\nu)=5\ell^2.
\end{align*}
By substituting the parameters of $\mathbf{P}_{17}$ into equation (\ref{latticeqform}), we obtain the criterion for which  $n$ is not represented by $\mathbf{P}_{17}$ as stated.
\end{proof}

\subsection{A slight generalisation}
With Proposition \ref{ssw113prop} and Lemma \ref{Eisencoeff}, we can actually prove more than a single form being not almost universal. Building off from them, we obtain a proof for Theorem \ref{notaugenintro}, in the form of the following proposition:
\begin{proposition}\label{notaugen}
For any non-negative integer $r\not\equiv 2\pmod{5}$, the form $\mathbf{P}_{30r+17}:=p_{30r+17}(x)+p_{30r+17}(y)+3p_{30r+17}(z)$ is not almost universal. In particular, $\mathbf{P}_{30r+17}$ does not represent $n$ whenever
\begin{align*}
8(30r+15)n + 5(30r+13)^2 = 5\ell^2,
\end{align*}
where $\ell$ is a prime that is congruent to 1 or $19\pmod{30}$.
\end{proposition}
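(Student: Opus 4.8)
The plan is to reduce the general form $\mathbf{P}_{30r+17}$ to the already-settled case $\mathbf{P}_{17}$ of Proposition \ref{notau}, exploiting the fact that the ambient quadratic form $x^2+y^2+3z^2$ and the relevant residue class modulo $30$ are \emph{the same} for every $m\equiv 17\pmod{30}$, so that no new modular-form computation is required. Concretely, writing $m=30r+17$ we have $\phi_m(x)=2(m-2)x-(m-4)=(60r+30)x-(30r+13)$, and since $60r+30\equiv 0$ and $30r+13\equiv 13\pmod{30}$, every value satisfies $\phi_m(x)\equiv -13\equiv 17\pmod{30}$. Thus any solution of $\phi_m(x)^2+\phi_m(y)^2+3\phi_m(z)^2=5\ell^2$ produces a triple $(X,Y,Z):=(\phi_m(x),\phi_m(y),\phi_m(z))$ with $X\equiv Y\equiv Z\equiv 17\pmod{30}$ and $X^2+Y^2+3Z^2=5\ell^2$; that is, a vector of the coset $L+\nu$ (with $L$ and $\nu$ as in the case $m=17$ above) representing $5\ell^2$. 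The proof of Proposition \ref{notau} shows, via Lemma \ref{Eisencoeff}, that the $5\ell^2$-th coefficient of $\Theta_{L+\nu}$ vanishes for every prime $\ell\equiv 1,19\pmod{30}$, i.e. $L+\nu$ does not represent $5\ell^2$. Hence no such triple exists, and $\mathbf{P}_{30r+17}$ fails to represent any $n$ with $8(30r+15)n+5(30r+13)^2=5\ell^2$.

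It remains to exhibit infinitely many such $n$, i.e. infinitely many primes $\ell\equiv 1,19\pmod{30}$ for which $n=\bigl(5\ell^2-5(30r+13)^2\bigr)/\bigl(8(30r+15)\bigr)$ lies in $\N_0$. Since $8(30r+15)=120(2r+1)$, integrality is equivalent to $24(2r+1)\mid \ell^2-(30r+13)^2$, which I would secure by imposing $\ell\equiv \pm(30r+13)\pmod{24(2r+1)}$. One checks $\gcd\bigl(30r+13,24(2r+1)\bigr)=1$ (indeed $30r+13$ is odd, is $\equiv 1\pmod 3$, and is $\equiv -2\pmod{2r+1}$), so this class is coprime to its modulus. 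I would then splice it with $\ell\equiv 1$ or $19\pmod{30}$ by the Chinese Remainder Theorem: both conditions force $\ell$ odd and $\ell\equiv 1\pmod 3$, so they are compatible at the shared primes $2$ and $3$; and when $r\not\equiv 2\pmod 5$ the modulus $24(2r+1)$ is coprime to $5$, so there is no further constraint to reconcile. Dirichlet's theorem on primes in arithmetic progressions then yields infinitely many primes $\ell$ in the combined residue class, and for all sufficiently large such $\ell$ the integer $n$ is non-negative.

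The hypothesis $r\not\equiv 2\pmod 5$ enters precisely here, and marks the main (and essentially only) genuine obstacle. When $r\equiv 2\pmod 5$ we have $5\mid 2r+1$, so the integrality condition forces $\ell^2\equiv(30r+13)^2\equiv 4\pmod 5$, i.e. $\ell\equiv 2,3\pmod 5$; but $\ell\equiv 1,19\pmod{30}$ forces $\ell\equiv 1,4\pmod 5$, and these are disjoint. Thus the CRT step has no solution and the method produces no admissible $n$, which is exactly why the statement is restricted to $r\not\equiv 2\pmod 5$. The remaining work is routine bookkeeping: verifying the gcd and compatibility computations above, and checking that the reduction to $L+\nu$ is faithful --- the latter being immediate once one observes that the weights $(1,1,3)$ and the class $17\pmod{30}$ do not depend on $r$, so that Proposition \ref{notau} and Lemma \ref{Eisencoeff} apply verbatim.
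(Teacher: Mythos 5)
Your proposal is correct and follows essentially the same route as the paper: reduce $\mathbf{P}_{30r+17}$ to the lattice coset $L+\nu$ of the $m=17$ case (since $\phi_m(x)\equiv 17\pmod{30}$ for all $m\equiv 17\pmod{30}$), invoke the vanishing of the $5\ell^2$-th coefficient of $\Theta_{\spn(L+\nu)}$, and then produce infinitely many admissible $n$ by CRT, with $r\not\equiv 2\pmod 5$ entering exactly as you describe. You spell out the CRT/Dirichlet step more explicitly than the paper does (only the choice $\ell\equiv +(30r+13)$, not $-(30r+13)$, is compatible with $\ell\equiv 1\pmod 3$, a detail worth fixing), but the argument is the same.
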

\begin{proof}
We substitute $m=30r+17$ into equation (\ref{qform}):
\begin{align}
\nonumber
&\quad\; (240r+120)n+5(30r+13)^2\\
&= (60rx+30x-30r-13)^2+(60ry+30y-30r-13)^2+3(60rz+30z-30r-13)^2.
\end{align}
Notice that the right hand side is also a quadratic form on the lattice coset $L+\nu$. Hence, by Proposition \ref{ssw113prop} and Lemma \ref{Eisencoeff}, it is clear that $n$ is not represented by $L+\nu$ whenever 
\begin{align*}
(240r+120)n+5(30r+13)^2=5\ell^2,
\end{align*}
where $\ell$ is a prime that is congruent to 1 or $19\pmod{30}$.

It remains to prove that there are infinitely many $n$ that can be written in this form. This is equivalent to the congruence equation
\begin{align*}
\ell^2-(30r+13)^2\equiv 0\pmod{48r+24}
\end{align*}
being solvable.
By the Chinese remainder theorem, it is easy to check that the equation is consistent whenever $48r+24$ is not divisible by $5$, that is, $r\not\equiv 2\pmod{5}$. This finishes the proof.
\end{proof}


\end{document}